\newcommand{\eme}[1]{}
\title{Combinatorial Relationship Between Finite Fields and Fixed Points of Functions Going Up and Down}
\author{
  Emerson Le\'on\\
  \texttt{emersonleon@gmail.com}
  \and
  Juli\'an Pulido\\
  \texttt{jd.pulido@uniandes.edu.co}
}
\newcommand{\R}{{\ensuremath{\mathbb{R}}}}
\newcommand{\F}{{\ensuremath{\mathbb{F}}}}
\DeclareMathOperator{\FP}{FP}
\theoremstyle{plain}
\newtheorem{theorem}{Theorem}[section]
\newtheorem{proposition}[theorem]{Proposition}
\newtheorem{corollary}[theorem]{Corollary}
\newtheorem*{theorem*}{Theorem}
\theoremstyle{definition}
\newtheorem{definition}[theorem]{Definition}
\newtheorem{notation}[theorem]{Notation}
\newtheorem{example}[theorem]{Example}
\begin{document}

\maketitle

\abstract{We explore a combinatorial bijection between two seemingly unrelated topics: the roots of irreducible polynomials  of degree $m$ over a finite field $\F_p$ for a prime number $p$ and the number of points that are periodic of order $m$ for a continuous piece-wise linear function  $g_p:[0,1]\rightarrow[0,1]$ that \emph{goes up and down $p$ times} with slope $\pm p$.  We provide a bijection between $\F_{p^n}$ and the fixed points of $g^n_p$ that naturally relates some of the structure in both worlds. Also we extend our result to other families of continuous functions that go up and down $p$ times, in particular to  Chebyshev polynomials, where we get a better understanding of its fixed points. A generalization for other piece-wise linear functions that are not necessarily continuous is also provided.
}

\section{Introduction}

Let us consider the  family of functions $g_{p}:[0,1]\rightarrow [0,1]$ that linearly increases from $0$ to $1$ on $I_k$   for $k$ even,  and linearly decreases from $1$ to $0$ on $I_k$   for $k$ odd, where $I_k:=\left[\frac{k}{p},\frac{k+1}{p}\right]$ for $0\le k\le p-1$. They can be described as follows.

\begin{definition}\label{defg}
 The function  $g_{p}:[0,1]\rightarrow [0,1]$ is given by
\[g_{p}(x)=
\begin{cases} 
{p}x-k, &\text{for $\frac{k}{{p^n}}\leq x\leq \frac{k+1}{{p^n}}$ with $k$  even.}\\
k+1-{p}x, &\text{for $\frac{k}{{p^n}}\leq x\leq \frac{k+1}{{p^n}}$ with $k$ odd.}
\end{cases} \]
\end{definition}

Denote by $g_p^n$ to the function obtained by taking  $g_p$ composed with itself $n$ times. A point $x$ is called to be \emph{periodic of order $m$} if $g_p^m(x)=x$, and $g_p^i(x)\neq x$ for $i=1,\,2,\ldots,m-1$.

\begin{figure}[h]\centering
\begin{picture}(205,90)
\multiput(0,0)(120,0){2}{\vector(0,1){85}}
\multiput(0,0)(120,0){2}{\vector(1,0){85}}
\put(40,80){\line(1,-2){40}}
\put(0,0){\line(1,2){40}}
\multiput(140,80)(40,0){2}{\line(1,-4){20}}
\multiput(120,0)(40,0){2}{\line(1,4){20}}
\linethickness{2pt}
\multiput(0,0)(120,0){2}{\line(1,1){80}}
\end{picture}
\caption{Fixed points of $g_2$ and $g_2^2$.}\label{f}
\end{figure}
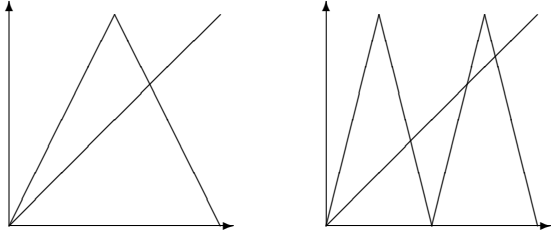

It is easy to check that $g_p^n$ is a function going up and down $p^n$ times, and moreover $g_p^n=g_{p^n}$. Therefore $g_p^n$ has $p^n$ fixed points and we can compute them by solving the corresponding linear equations (see Proposition \ref{gfixedpts}).
Let $\FP(g_p^n)$ be the set of all fixed points of $g_p^n$.

Notice that if $x\in \FP(g_p^n)$ then $x$ is  periodic of order $d$ for some divisor $d$ of $n$.  The study of these fixed points and their orbits by $g_p$ is important in chaos theory and dynamical systems (see \cite{chaos}). These functions are particularly interesting as examples for the Sharkovsky Theorem (see \cite{sharkovsky}), that implies here the existence of points of $g_p$ of order $m$ for any integer $m\ge 1$.

We are specially interested in the case when $p$ is a prime number, since we will relate fixed points of $g_p^n$  with the elements of the finite field $\F_{p^n}$ with $p^n$ elements, for $p$ prime.

We begin this work with the following combinatorial observation.

\begin{theorem}\label{t1}
Let $p$ be a prime number and $m\ge 1$. The number of points $x\in \FP(g_p^m)$ that are periodic of order $m$ is equal to $m$ times the number  of monic irreducible polynomials of degree $m$ over the field $\F_p$.
\end{theorem}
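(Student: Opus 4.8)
The plan is to show that both numbers in the statement are given by the same arithmetic formula, namely $\sum_{d\mid m}\mu(m/d)\,p^{d}$, where $\mu$ is the M\"obius function, and therefore coincide. On each side the formula will come out of M\"obius inversion applied to a "summatory" identity whose right-hand side is $p^{n}$: on the dynamical side the identity is $|\FP(g_p^n)|=p^{n}$, furnished by Proposition \ref{gfixedpts}; on the finite-field side it is the classical degree count coming from the factorization of $X^{p^{n}}-X$ over $\F_p$.

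For the dynamical side I would first record the elementary orbit lemma: if $g_p^{n}(x)=x$ and $d$ is the exact (minimal) period of $x$, then $d\mid n$. Indeed, writing $n=qd+r$ with $0\le r<d$ and using $g_p^{d}(x)=x$ repeatedly gives $x=g_p^{n}(x)=g_p^{r}\bigl(g_p^{qd}(x)\bigr)=g_p^{r}(x)$, so $r=0$ by minimality of $d$ (note this does not require $g_p$ to be invertible). Hence $\FP(g_p^n)$ is partitioned by exact period, and if $a_d$ denotes the number of points of exact period $d$ we get $\sum_{d\mid n}a_d=|\FP(g_p^n)|=p^{n}$ for every $n\ge 1$; M\"obius inversion then yields $a_m=\sum_{d\mid m}\mu(m/d)\,p^{d}$. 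For the finite-field side I would invoke the standard fact that $X^{p^{n}}-X=\prod f$, where $f$ ranges over all monic irreducible polynomials over $\F_p$ whose degree divides $n$, together with separability (finite fields are perfect, so an irreducible polynomial of degree $d$ has $d$ distinct roots, all lying in $\F_{p^{d}}$). Comparing degrees gives $\sum_{d\mid n}b_d=p^{n}$, where $b_d$ is the number of roots of monic irreducible polynomials of degree $d$ over $\F_p$ (equivalently $b_d=d\,N_d$ with $N_d$ the number of such polynomials, equivalently the number of $\alpha\in\F_{p^{n}}$ of degree exactly $d$ over $\F_p$). M\"obius inversion again gives $b_m=\sum_{d\mid m}\mu(m/d)\,p^{d}$. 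Since $a_m=b_m$ for all $m$, the theorem follows.

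I expect the only genuinely delicate point to be the clean period bookkeeping on the dynamical side, i.e.\ making sure the $p^{n}$ fixed points of $g_p^n$ really are $p^{n}$ \emph{distinct} points with no point counted under two different exact periods; the first half is exactly what Proposition \ref{gfixedpts} provides (it exhibits them explicitly as the rationals $\tfrac{k}{p^{n}-1}$ and $\tfrac{k+1}{p^{n}+1}$ in the relevant ranges), and the second half is the orbit lemma above. Everything else is the classical necklace/M\"obius count. I would also add the remark that this coincidence is not accidental: the itinerary map sending a point to the sequence of subintervals $I_{k}$ it visits under iteration conjugates $g_p$ to a subshift on the $p$-letter alphabet $\{0,1,\dots,p-1\}$, so fixed points of $g_p^m$ correspond to length-$m$ words and points of exact period $m$ to \emph{aperiodic} words, which are counted by $m$ times the number of $p$-ary necklaces of length $m$, i.e.\ again by $\sum_{d\mid m}\mu(m/d)\,p^{d}$ -- the same quantity that counts the roots of the degree-$m$ irreducibles over $\F_p$. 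This is precisely the structure that the remainder of the paper promotes to an explicit bijection between $\F_{p^n}$ and $\FP(g_p^n)$.
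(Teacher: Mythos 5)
Your proposal is correct and follows essentially the same route as the paper: both counts are shown to satisfy the summatory identity $\sum_{d\mid n}(\cdot)=p^{n}$ (from $|\FP(g_p^n)|=p^n$ on one side and the factorization of $X^{p^n}-X$ on the other), and M\"obius inversion identifies both with $\sum_{d\mid m}\mu(m/d)p^{d}$. Your write-up is somewhat more careful than the paper's (the explicit orbit lemma showing the exact period divides $n$, and the separability remark), but these are elaborations of the same argument rather than a different one.
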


The number of irreducible polynomials is well known (see \cite{ffields}) and the recurrence relation that it satisfies is exactly the same as the one used to count  orbits of fixed points of $g_p$. We explain this in Section \ref{combarg}. We  conclude the following.

\begin{corollary} \label{cor1}
 There is a bijection $B:\FP(g_p^n)\rightarrow \F_{p^n}$, such that $B(x)^p=B(g_p(x))$ for every $x\in \FP(g_p^n)$. 
\end{corollary}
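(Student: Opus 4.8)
The plan is to construct $B$ one orbit at a time, matching the $g_p$-orbits inside $\FP(g_p^n)$ with the orbits of the Frobenius map $\phi\colon \F_{p^n}\to\F_{p^n}$, $\phi(y)=y^p$. First I would record the orbit structure on each side. If $x\in\FP(g_p^n)$ and $d$ is its order (the least positive integer with $g_p^d(x)=x$), then $g_p$ cyclically permutes the set $\{x,g_p(x),\dots,g_p^{d-1}(x)\}$, these $d$ elements are pairwise distinct, $g_p^i(x)=x$ iff $d\mid i$, and in particular $d\mid n$; so $\FP(g_p^n)$ is the disjoint union, over the divisors $d$ of $n$, of its $g_p$-orbits of size $d$, and writing $a_d$ for the number of such orbits, the number of points periodic of order $d$ is exactly $d\,a_d$. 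On the finite-field side, $\phi$ is an automorphism of $\F_{p^n}$ fixing $\F_p$, the orbit of $y$ is $\{y,y^p,\dots,y^{p^{d-1}}\}$ with $d=[\F_p(y):\F_p]\mid n$, and these are precisely the $d$ distinct roots of the minimal polynomial of $y$, which is the unique monic irreducible polynomial of degree $d$ over $\F_p$ having $y$ as a root; this is classical (see \cite{ffields}), the distinctness coming from the separability of $X^{p^n}-X$. Hence $\F_{p^n}$ is the disjoint union over $d\mid n$ of its $\phi$-orbits of size $d$, these orbits are in bijection with the monic irreducible polynomials of degree $d$ over $\F_p$, and the number of $y$ lying in such an orbit equals the number of roots of those polynomials.

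Next I compare counts. Applying Theorem \ref{t1} with $m=d$ for each divisor $d$ of $n$, the number of points periodic of order $d$ equals the number of roots of monic irreducible polynomials of degree $d$ over $\F_p$; by the previous paragraph this is also the number of elements of $\F_{p^n}$ lying in a $\phi$-orbit of size $d$. Combining with $d\,a_d$ from the dynamical side and dividing by $d$, the number $a_d$ of $g_p$-orbits of size $d$ in $\FP(g_p^n)$ equals the number of $\phi$-orbits of size $d$ in $\F_{p^n}$, for every $d\mid n$. Fix, for each $d\mid n$, an arbitrary bijection between these two finite sets of orbits. For a matched pair, choose a base point $x$ of the $g_p$-orbit and a base point $y$ of the $\phi$-orbit and define $B(g_p^i(x)):=y^{p^i}$ for $0\le i\le d-1$; since both orbits have exactly $d$ elements, $i\mapsto g_p^i(x)$ and $i\mapsto y^{p^i}$ are bijections from $\Z/d\Z$ onto them, so $B$ is well defined and bijective on each orbit. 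Running over all orbits gives a bijection $B\colon\FP(g_p^n)\to\F_{p^n}$.

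It then remains to check $B(g_p(x'))=B(x')^p$. Write $x'=g_p^i(x)$ with $x$ the chosen base point of its orbit of size $d$ and $0\le i\le d-1$. If $i<d-1$, then $B(g_p(x'))=y^{p^{i+1}}=(y^{p^i})^p=B(x')^p$. If $i=d-1$, then $g_p(x')=g_p^d(x)=x$, so $B(g_p(x'))=y$, while $B(x')^p=(y^{p^{d-1}})^p=y^{p^d}=y$ because $y\in\F_{p^d}$; so the equivariance holds in both cases, completing the argument.

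I do not expect a genuine obstacle here: once Theorem \ref{t1} is available the proof is pure bookkeeping. The only point requiring a little care is the classical correspondence between $\phi$-orbits of size $d$ in $\F_{p^n}$ and monic irreducible polynomials of degree $d$ over $\F_p$ — in particular the fact that such an orbit has exactly $d$ elements — since it is this that converts the root-counting in Theorem \ref{t1} into the equality of the numbers of orbits of size $d$ on the two sides that makes the orbit-by-orbit matching possible.
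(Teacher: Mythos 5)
Your proof is correct and follows the same route the paper intends: the paper states Corollary \ref{cor1} as a ``simple consequence'' of the enumerative identity in Theorem \ref{t1} without writing out details, and your argument is exactly the orbit-by-orbit bookkeeping (matching $g_p$-orbits of size $d$ with Frobenius orbits of size $d$ and transporting the cyclic structure) that this assertion leaves implicit.
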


The existence of this bijection is a simple consequence of the enumerative relationship previously described (Theorem \ref{t1}).  In this paper we provide a  bijection that also relates with the multiplicative structure of $\F_{p^n}$. 

We construct a permutation map $\pi_{p^n}:\{0,\,1,\ldots,\,p^n-1\}\rightarrow \{0,\,1,\ldots,\,p^n-1\}$  that will help us to create our  bijection $B$. 
\begin{definition}\label{pipn}
The permutation  $\pi_{p^n}$ is defined recursively on $n$ 
as follows: take $\pi_{p^1}$ to be the identity map from 0 to $p-1$. Then to define $\pi_{p^n}(k)$ for $0\le k\le p^n-1$, consider the value $a$ such that $a p^{n-1}\le k<(a+1)p^{n-1}$ and then take
 \[\pi_{p^n}(k)=\begin{cases}
  \pi_{p^{n-1}}(k-ap^{n-1}) + ap^{n-1}\text{ for $a$ even}\\
 \pi_{p^{n-1}}(p^{n-1}-(k-ap^{n-1})-1) + ap^{n-1}\text{ for $a$ odd}
\end{cases}\]
\end{definition}

\begin{example}
Some examples for $p=2$ and $p=3$ of how $\pi_{p^n}$  permute the numbers $0, 1, \dots ,p^n-1$ :

$\pi_{2^1}: 0, 1$

$\pi_{2^2}: 0, 1, 3, 2$

$\pi_{2^3}: 0, 1, 3, 2, 6, 7, 5, 4$

$\pi_{2^4}: 0, 1, 3, 2, 6, 7, 5, 4, 12, 13, 15, 14, 10, 11, 9, 8$

$\pi_{3^1}: 0, 1, 2$

$\pi_{3^2}: 0, 1, 2, 5, 4, 3, 6, 7, 8$

$\pi_{3^3}: 0, 1, 2, 5, 4, 3, 6, 7, 8, 17,  16, 15,12, 13, 14, 11, 10, 9,
18, 19, 20, 23, 22, 21, 24, 25, 26$
\end{example}

\begin{definition}[Bijection $B_\alpha$]
 Let $\alpha$ be a primitive root (i.e., a generator of the multiplicative group) of $\F_{p^n}$ (\cite{ffields}) and  let $0=x_0<x_1<\cdots<x_{p^{n}-1}$ be the fixed points of $g_p^n$. 
 We define the bijection $B_\alpha:\FP(g_p^n)\rightarrow \F_{p^n}$ by  
 $B_\alpha(x_{0})=0\in \F_{p^n}$ or  $B_\alpha(x_k)=\alpha^{\pi_{p^n}(k)}$ for all other $k>0$. 
\end{definition}

\begin{theorem}\label{t2}
The function $B_\alpha$ is a bijection such that $(B_\alpha(x_k))^p=B_\alpha(g_p(x_k))$ for any fixed point $x_k\in \FP(g_p^n).$ 
\end{theorem}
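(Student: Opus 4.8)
The plan is to reinterpret the permutation $\pi_{p^n}$ through the symbolic dynamics of $g_p$, thereby reducing Theorem~\ref{t2} to the elementary fact that a cyclic shift of a base-$p$ digit string, once the string is read as an integer, acts as multiplication by $p$ modulo $p^n-1$.

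First the bookkeeping. From Definition~\ref{pipn} an easy induction shows that $\pi_{p^n}$ is a permutation of $\{0,1,\dots,p^n-1\}$ fixing $0$; since $\alpha$ has order $p^n-1$, the assignment $x_k\mapsto\alpha^{\pi_{p^n}(k)}$ for $k\ge1$ is then a bijection onto $\F_{p^n}^{\ast}$, so $B_\alpha$ is a bijection onto $\F_{p^n}$. Next, from $g_p^n(x_k)=x_k$ we get $g_p^n(g_p(x_k))=g_p(x_k)$, so $g_p$ maps $\FP(g_p^n)$ to itself and, being inverted there by $g_p^{n-1}$, restricts to a permutation; write $g_p(x_k)=x_{\sigma(k)}$, so $\sigma$ is a permutation with $\sigma^{n}=\mathrm{id}$.

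\textbf{Key Lemma.} For $x\in\FP(g_p^n)$ let its \emph{itinerary} be the $n$-periodic sequence $(i_0,i_1,\dots)$ with $g_p^{t}(x)\in[\,i_t/p,(i_t+1)/p\,]$ (the $i_t$-th lap of $g_p$). Then $\pi_{p^n}(k)=\sum_{t=0}^{n-1} i_t\,p^{\,n-1-t}$, where $(i_t)$ is the itinerary of $x_k$. I would prove this in three steps. \emph{(a) Coding:} each lap of $g_p$ maps affinely onto $[0,1]$, so, writing $\ell_i$ for the inverse branch on the $i$-th lap, every word $(i_0,\dots,i_{n-1})\in\{0,\dots,p-1\}^n$ is the itinerary of exactly one point of $\FP(g_p^n)$, namely the unique fixed point of the contraction $\ell_{i_0}\circ\cdots\circ\ell_{i_{n-1}}$; hence $x\mapsto$ itinerary is a bijection $\FP(g_p^n)\to\{0,\dots,p-1\}^n$. (Itineraries are unambiguous since a periodic point cannot lie at an interior lap endpoint $j/p$ with $1\le j\le p-1$ — such points are strictly preperiodic — while the endpoints $0$ and $1$ simply receive constant words.) \emph{(b) Monotonicity:} this bijection is increasing for the order on words in which $(i_t)$ precedes $(i_t')$ when, at the first position $t$ where they differ, $i_t<i_t'$ if $\#\{s<t:i_s\text{ odd}\}$ is even and $i_t>i_t'$ if it is odd; indeed, if $x<x'$ in $\FP(g_p^n)$ then $g_p^{t}$ is monotone on the common interval $\ell_{i_0}\circ\cdots\circ\ell_{i_{t-1}}([0,1])$ containing $x$ and $x'$, increasing or decreasing according to that parity, which forces the stated comparison of $i_t$ with $i_t'$. \emph{(c) Matching:} the recursion in Definition~\ref{pipn} is exactly the one enumerating $\{0,\dots,p-1\}^n$ in the order of (b) and returning the base-$p$ value of each word — the leading digit runs through $0,1,\dots,p-1$, and after a digit $a$ the remaining block is enumerated in that same order if $a$ is even and in the reversed order (the reversal being $j\mapsto p^{n-1}-1-j$) if $a$ is odd. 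Combining (a)–(c): since $x_k$ is the $k$-th fixed point in increasing order, its itinerary is the $k$-th word in the order of (b), whose base-$p$ value is precisely $\pi_{p^n}(k)$.

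\textbf{Conclusion.} The itinerary of $g_p(x_k)=x_{\sigma(k)}$ is the shift $(i_1,i_2,\dots)$ of that of $x_k$, so by the Lemma and $\sigma^{n}=\mathrm{id}$ (which permits reading indices cyclically mod $n$),
\[\pi_{p^n}(\sigma(k))=\sum_{t=0}^{n-1} i_{t+1}\,p^{\,n-1-t}=p\sum_{t=0}^{n-1} i_t\,p^{\,n-1-t}-i_0\,(p^n-1)\equiv p\,\pi_{p^n}(k)\pmod{p^n-1}.\]
As $\alpha$ has order $p^n-1$, this yields $B_\alpha(g_p(x_k))=\alpha^{\pi_{p^n}(\sigma(k))}=\bigl(\alpha^{\pi_{p^n}(k)}\bigr)^{p}=B_\alpha(x_k)^{p}$ for $k\ge1$; for $k=0$ it holds trivially since $g_p(0)=0$ and $B_\alpha(0)=0$. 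The main obstacle is the Key Lemma, and within it step (b): making the monotonicity and the parity count precise, including the behaviour of the orbits of $0$ and $1$. Step (c) is a short induction on $n$, and step (a) is the standard fact that the full ``up and down $p$ times'' family realizes the entire one-sided $p$-shift on its periodic points.
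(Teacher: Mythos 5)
Your proposal is correct, and it takes a genuinely different route from the paper. The paper never mentions itineraries: it first derives an explicit digit formula for $\pi_{p^n}$ (Proposition \ref{pibasep}: $b_1=a_1$ and $b_i=a_i$ or $p-1-a_i$ according to the parity of $b_1+\cdots+b_{i-1}$), writes $x_k$ as a periodic base-$p$ expansion, describes $g_p$ as a shift or complement-shift on digits, and then verifies $(B_\alpha(x_k))^p=B_\alpha(g_p(x_k))$ by a digit-by-digit induction split into four cases ($a_1$ even or odd, crossed with $p$ odd or $p=2$). Your Key Lemma is in substance equivalent to that digit formula — one checks easily that the paper's $b_{t+1}$ is exactly your $i_t$ — but packaging it as ``$\pi_{p^n}(k)$ is the base-$p$ value of the itinerary of $x_k$'' is what collapses the paper's case analysis: once the lemma is in place, $g_p$ shifts itineraries by definition, a cyclic shift multiplies the base-$p$ value by $p$ modulo $p^n-1$, and the theorem falls out in two lines. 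The cost is that the combinatorial work is relocated into your steps (b) and (c), the twisted-lexicographic order isomorphism, where the same parity bookkeeping reappears (the slope of $g_p^t$ on a cylinder is positive iff the number of odd laps traversed is even, and the recursion of Definition \ref{pipn} reverses the tail enumeration exactly when the leading digit is odd); your sketches of both steps are sound, and the boundary issue (interior lap endpoints being strictly preperiodic, $0$ and $1$ getting constant words) is correctly handled. A further advantage of your formulation is that it transfers essentially verbatim to the discontinuous functions $g_{p,I}$ of Section \ref{sgeneralud}, with ``odd lap'' replaced by ``lap not in $I$,'' whereas the paper reproves Theorem \ref{t2I} separately.
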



We begin in Section \ref{combarg} with the proof of Theorem \ref{t1} by  obtaining the explicit counting in both cases using the M\"obius inversion formula.
Then in Section \ref{sbasep} we use the representation in base $p$ of $g_p$ and of $\pi_{p^n}$ in order to prove Theorem \ref{t2}  and Theorem \ref{t3}. We need to consider two different cases, since the arguments are slightly different depending on whether $p=2$ or $p$ is odd.

In Section \ref{scheb} we use a continuity argument to extend our result to  other functions $f_p$ going up and down $p$ times, in particular to Chebyshev polynomials of the first kind $T_p(x)$ on the interval $[-1,1]$. We  study its fixed points and extend our bijection with $\F_{p^n}$ to $\FP(T_{p^n})$. 
Finally in Section \ref{sgeneralud} we generalize Theorem \ref{t2} and most of the results of previous sections to other similar piece-wise linear functions $g_{p,I}: [0,1]\rightarrow[0,1]$ that are not necessarily continuous,  going up or down with slope $\pm{p}$ with increasing pattern given by a set $I\subseteq \{0,1, \ldots, p-1\}$ (see definition \ref{defgI}). We define the corresponding permutations $\pi_{p^n,I}$ analogous to Definition \ref{pipn} in Definition \ref{pipnI} and we use them in Theorem \ref{t2I} to generalize Theorem \ref{t2} to  bijections $B_{\alpha,I}$ from the fixed points of $g^n_{p,I}$ to the finite field $\F_{p^n}$, for each $I$.

\subsection*{Acknowledgments}
This paper is dedicated to two former professors at Universidad Nacional de Colombia that are no longer with us, Yu Takeuchi and Alexander Zavadsky, that talked about these seemingly unrelated topics in different lectures on the same semester. Also special thanks to  Alexander Fomin,  Felipe Rincón and Tristram Bogart  for fruitful conversations.

 


\section{The combinatorial relationship}\label{combarg}
\begin{proof}[Proof of Theorem \ref{t1}]
 First let $I_p(n)$ be the number of monic irreducible polynomials of degree $n$ over a finite field $\F_p$ for $p$ prime. This can be explicitly computed using the \emph{Möbius inversion formula}, and the fact that 
$$\sum_{d\mid n}dI_p(d)=p^n,$$ 
that comes from the degrees in the irreducible factorization of the polynomial $x^{p^n}-x$ over $\F_p$ (see \cite{ffields}). 

Precisely the same recursive relation is satisfied by our fixed points. Notice that all fixed points in $\FP(f_p^n)$ are of order $d$ so that $d$ divides $n$, and then if we denote by $J(m)$ the number of fixed points in $\FP(f_p^m)$ of order $m$ we get:
$$\sum_{d\mid n}J_p(d)=p^n.$$

We conclude that $J_p(m)=mI_p(m)$, since the \emph{Möbius inversion formula} in both case provides us the same value
$$J_p(m)=mI_p(m)=\sum_{d\mid n}\mu(n/d)p^d,$$
where $\mu$ is the classical Möbius function for the natural numbers (with respect to divisibility). 
\end{proof}

\section{Fixed points and  base $p$ representation}\label{sbasep}

\begin{proposition}\label{gfixedpts}
Let $x_0<x_1<\ldots <x_{p^n-1}$ be the fixed points of the function $g_p^n$.
Then, for $k<p^n$ we have that 
 $x_k=\frac{k}{p^n-1}$ if $k$ is even,  or 
 $x_k=\frac{k+1}{p^n+1}$ if $k$ is odd. 
\end{proposition}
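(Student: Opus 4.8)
The plan is to identify the fixed points of $g_p^n$ directly, using the fact noted in the excerpt that $g_p^n = g_{p^n}$, so it suffices to understand the fixed points of a single "up-and-down $N$ times" function $g_N$ with $N = p^n$. On each interval $I_k = [k/N, (k+1)/N]$ the function $g_N$ is the affine map $x \mapsto Nx - k$ (for $k$ even) or $x \mapsto k+1-Nx$ (for $k$ odd). A fixed point in $I_k$ is a solution of $g_N(x) = x$ restricted to that interval, so I would solve the two linear equations: for $k$ even, $Nx - k = x$ gives $x = k/(N-1)$; for $k$ odd, $k+1-Nx = x$ gives $x = (k+1)/(N+1)$.

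The next step is to check that each of these candidate solutions actually lies in the interval $I_k$ it was derived from, so that it is a genuine fixed point and there is exactly one per interval. For $k$ even I must verify $k/N \le k/(N-1) \le (k+1)/N$, i.e. that $k/(N-1)$ does not overshoot into $I_{k+1}$; the right inequality is equivalent to $k \le N-1$, which holds for all $k \le N-1$ with equality only at the top. For $k$ odd I must verify $k/N \le (k+1)/(N+1) \le (k+1)/N$, where now the left inequality is the binding one and reduces to $k \ge $ something that holds for $k \ge 1$. Since $g_N$ maps each $I_k$ onto $[0,1]$ with slope $\pm N$ and $|{\pm N}| > 1$, the line $y = x$ meets the graph over $I_k$ in exactly one point, so these $N$ candidates are precisely all fixed points, giving $\FP(g_p^n) = \FP(g_N)$ with $|\FP(g_p^n)| = N = p^n$ as already asserted in the text.

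Finally I would confirm the claimed ordering $x_0 < x_1 < \cdots < x_{p^n-1}$ by checking that the $k$-th fixed point (in the formula) lies in $I_k$ and that consecutive intervals are disjoint in their interiors, which forces the natural index $k$ to coincide with the rank order; a small separate check handles $x_0 = 0$ (from the even formula at $k=0$) and the largest point. I expect the only mildly delicate part to be the interval-membership inequalities and keeping the parity bookkeeping straight — in particular making sure that when $k$ is even the fixed point does not equal $(k+1)/N$ (it does only in the degenerate top case) and when $k$ is odd it does not equal $k/N$ — but these are all elementary manipulations of the two linear equations, so there is no real obstacle beyond careful casework.
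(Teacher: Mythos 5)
Your proposal is correct and follows essentially the same route as the paper: restrict to the interval $I_k=[k/p^n,(k+1)/p^n]$, solve the corresponding linear equation $p^n x-k=x$ or $k+1-p^nx=x$, and read off $x_k=k/(p^n-1)$ or $x_k=(k+1)/(p^n+1)$. The extra verifications you include (that each candidate lies in its interval, uniqueness per interval from the slope $\pm p^n$, and the ordering) are sound and merely make explicit what the paper leaves implicit.
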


\begin{proof}
Notice that  $\frac{k}{{p^n}}\leq x_k\leq \frac{k+1}{{p^n}}$, then if  $k$ is even, we have that $g_p^n(x_k) = {p^n}x_k-k = x_k $ which implies $x_k=\frac{k}{p^n-1}$, if $k$ is odd then   $g_p^n(x_k) = k+1-{p^n}x_k = x_k $ which implies $x_k=\frac{k+1}{p^n+1}$
\end{proof}

\begin{notation}
We denote the expression for a number $k$ in base $p$ as $k=(a_1a_2\ldots a_n)_p$, meaning by this that  $$k=a_1p^{n-1}+a_2p^{n-2}+\cdots +a_{n-1}p+a_n.$$ For other rational values, we write $(0.a_1a_2\ldots)_p$ for the number $\sum_{i\geq1} a_i p^{-i}$. If the expression is periodic, the periodic part is overlined. (For convenience, we sometimes don't use the minimal periodic expression for a number, but some repetitions of it.)
\end{notation}

\begin{proposition}\label{periodicfp} The expression in base $p$ of $x=\frac{k}{p^n-1}$ is periodic, where the first $n$ digits represent $k$ in base $p$ and $a_{n+i}=a_i$ for all $i\ge 1$. 
Also, if $x=\frac{k+1}{p^n+1},$ its expression in base $p$ has period $2n$, where the first $n$ digits represent $k$ in base $p$, while the next $n$ digits are complementary of the first $n$ digits, that is  $a_{n+i}=p-1-a_i$.
\end{proposition}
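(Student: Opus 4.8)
The plan is to establish both claims by a direct computation with geometric series in base $p$, verifying that the proposed periodic expansions sum to the stated rational numbers.

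\medskip

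First I would handle the case $x = \frac{k}{p^n-1}$. Write $k = (a_1a_2\ldots a_n)_p$, so that $k = \sum_{i=1}^{n} a_i p^{n-i}$, and consider the number $y$ whose base-$p$ expansion is the purely periodic string $\overline{a_1 a_2 \ldots a_n}$, i.e. $y = \sum_{j \ge 0} \sum_{i=1}^n a_i p^{-(jn+i)}$. Factoring out the inner sum, $y = \left(\sum_{i=1}^n a_i p^{-i}\right)\sum_{j\ge 0} p^{-jn} = \left(\sum_{i=1}^n a_i p^{-i}\right)\cdot \frac{1}{1 - p^{-n}}$. Since $\sum_{i=1}^n a_i p^{-i} = p^{-n}\sum_{i=1}^n a_i p^{n-i} = \frac{k}{p^n}$, this gives $y = \frac{k}{p^n}\cdot\frac{p^n}{p^n-1} = \frac{k}{p^n-1}$, exactly as claimed; and the digits satisfy $a_{n+i} = a_i$ by construction. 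I should note the mild caveat (already flagged in the Notation block) that this expansion need not be the minimal-period one, but it is a valid periodic expansion.

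\medskip

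For the second case $x = \frac{k+1}{p^n+1}$ with $k = (a_1\ldots a_n)_p$, I would show that the string with period $2n$ given by $a_1\ldots a_n\,b_1\ldots b_n$ where $b_i = p-1-a_i$ sums to $x$. Let $z = \sum_{j\ge 0} p^{-2nj}\left(\sum_{i=1}^n a_i p^{-i} + \sum_{i=1}^n b_i p^{-(n+i)}\right)$. The inner bracket equals $\frac{k}{p^n} + p^{-n}\sum_{i=1}^n (p-1-a_i)p^{-i} = \frac{k}{p^n} + p^{-n}\left(\frac{p^n-1}{p^n} - \frac{k}{p^n}\right) = \frac{k}{p^n} + \frac{p^n - 1 - k}{p^{2n}} = \frac{p^n k + p^n - 1 - k}{p^{2n}} = \frac{(k+1)(p^n-1)}{p^{2n}}$. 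Multiplying by $\sum_{j\ge 0} p^{-2nj} = \frac{1}{1-p^{-2n}} = \frac{p^{2n}}{p^{2n}-1} = \frac{p^{2n}}{(p^n-1)(p^n+1)}$ yields $z = \frac{k+1}{p^n+1}$, as desired, with $a_{n+i} = p-1-a_i$ by construction.

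\medskip

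I do not expect a genuine obstacle here: the proof is a routine manipulation of two geometric series, and the only point requiring a little care is the bookkeeping identity $\sum_{i=1}^n (p-1-a_i)p^{-i} = \frac{p^n-1-k}{p^n}$, which is just the standard "complement" relation $\sum_{i=1}^n (p-1)p^{-i} = 1 - p^{-n}$ combined with $\sum a_i p^{-i} = k/p^n$. The one thing worth stating explicitly for the reader is why these particular expansions are the relevant ones — namely that $0 \le k < p^n$ guarantees each $a_i \in \{0,\ldots,p-1\}$ and each $b_i \in \{0,\ldots,p-1\}$, so the strings are legitimate base-$p$ digit sequences — after which uniqueness of the (eventually periodic) base-$p$ representation of a rational in $[0,1]$ finishes the identification.
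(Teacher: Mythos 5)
Your proof is correct, and it runs in the opposite logical direction from the paper's. You write down the candidate periodic digit string, sum the resulting geometric series, and verify that its value is $\frac{k}{p^n-1}$ (resp.\ $\frac{k+1}{p^n+1}$), then identify that string with the base-$p$ expression of $x$. The paper instead starts from the unknown expansion $x=\sum_{i\ge 1}a_ip^{-i}$, expands the identity $(p^n-1)x=k$ (resp.\ $(p^n+1)x=k+1$) as a formal series in powers of $p$, and reads off the digit relations $a_i=k_i$ for $i\le n$ together with $a_{n+i}=a_i$ (resp.\ $a_{n+i}=p-1-a_i$) by comparing coefficients. The arithmetic content is identical, just read forwards versus backwards. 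Your version avoids having to justify term-by-term coefficient comparison inside an infinite series (the paper must remark on absolute convergence and rearrangement for exactly this reason); the price is that you must separately check, as you do, that all proposed digits lie in $\{0,\dots,p-1\}$ and then appeal to the identification of a convergent digit series with the base-$p$ expression of its sum. That last appeal is harmless here: since $p^n\mp 1$ is coprime to $p$, the numbers in question are not of the form $m/p^j$ (except the trivial endpoints), so there is no ambiguity between terminating and non-terminating representations. Both write-ups are equally routine; yours is marginally more self-contained.
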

\begin{proof} 
Let $x = \sum_{i\geq1} a_i p^{-i}$ the expression for $x$ in base $p$. Lets also  express $k$ in base $p$. $$k = k_1p^{n-1}+k_2p^{n-2}+\cdots +k_{n-1}p+k_n.$$
If $\frac{k}{p^n-1} = x= \sum_{i\geq 1} a_i p^{-i}$, then $(p^n-1)x = k$ thus
$$k=(p^n - 1) \sum_{i\geq 1} a_{i}p^{-i} = \sum_{i\geq 1}  a_i p^{n-i} - \sum_{i\geq 1} a_i p^{-i},$$
$$k=\sum_{i= 1}^{n}  a_i p^{n-i} +  \sum_{j\geq 1}  a_{j+n} p^{-j} - \sum_{i\geq 1} a_i p^{-i}.$$ 
In the previous line we replaced $i=j+n$ for $j\ge 1$. 
If we look at the coefficients of the non-negative powers of $p$ (where $1-n\le j\le 0$) we obtain that  $k_{i} = a_{i}$ for $1\leq i \leq n$. Looking at coefficients with negative powers of $p$ (that are zero in the right expression), we find that $a_{n+i} = a_i$ for all $i\geq 1$,  which proves the first part of the proposition.

Similarly if $x = \frac{k+1}{p^n+1}$ then $$(p^n+1)\sum_{i\geq1} a_i p^{-i} = \sum_{i\geq1} \left(a_i p^{n-i} + a_i p^{-i}\right) = k+1,$$ and therefore $k=\sum_{i\geq1}\left( a_i p^{n-i} + a_i p^{-i}\right) -1 $. Observe that -1 can be expressed as the telescoping series  $-1= \sum_{i\geq 1} \left(p^{-i} - p^{-(i-1)}\right),$ and then $$k = \sum_{i\geq1} \left(a_i p^{n-i} + a_i p^{-i} + p^{-i} - p^{-i+1}\right) = \sum_{i\geq1} a_i p^{n-i} - \sum_{i\geq1} p^{-i}(p-1 - a_i),$$
$$k=\sum_{i= 1}^{n}  a_i p^{n-i} +  \sum_{j\geq 1}  a_{j+n} p^{-j} - \sum_{i\geq1} p^{-i}(p-1 - a_i) .$$ 

Now looking at the coefficients of the non-negative powers of $p$, we obtain that $a_{i}= k_i$ for $1\leq i \leq n$. Also, from the negative powers of $p$ that need to have coefficient zero, we obtain that $a_{n+i} = (p-1)-a_i $ for all $i \geq 1$, and therefore $a_{2n+i} = (p-1)-((p-1)-a_i)=a_i $. This completes the proof. 
(Notice that all the series involved in the proof are absolutely convergent so we can rearrange them as we wish).
\end{proof} 


\begin{example}
For $p=2$, $n=3$,  the fixed points in $\FP(g_2^3)$ are the following

$x_0=\dfrac{0}{7}=0.\overline{000}_p $, \quad
$x_1=\dfrac{2}{9}=0.\overline{001110}_p $, \quad
$x_2=\dfrac{2}{7}=0.\overline{010}_p $, \quad
$x_3=\dfrac{4}{9}=0.\overline{011100}_p $, 

$x_4=\dfrac{4}{7}=0.\overline{100}_p $, \quad
$x_5=\dfrac{6}{9}=0.\overline{101010}_p $, \quad
$x_6=\dfrac{6}{7}=0.\overline{110}_p $, \quad
$x_7=\dfrac{8}{9}=0.\overline{111000}_p $. 
\end{example}

The following proposition shows how the function $g_p$ looks like when expressed in base $p$. 

\begin{proposition}\label{ginbasep} If $x=\sum_{i\ge 1}  \frac{a_i}{p^i}$ for $0\le a_i \le p-1,$ then
\[g_p(x)=
\begin{cases} 
\sum_{i\ge 1} \frac{a_{i+1}}{p^i}, &\text{if $a_1$  is even;}\\
\sum_{i\ge 1}\frac{p-1-a_{i+1}}{p^i}, &\text{if $a_1$  is odd.}
\end{cases} \]
\end{proposition}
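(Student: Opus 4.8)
The plan is to substitute the base-$p$ expansion directly into the two linear branches of Definition~\ref{defg}. The first step is to locate $x$: since $0\le a_i\le p-1$ for all $i$, summing the geometric tail gives $\frac{a_1}{p}\le x\le\frac{a_1+1}{p}$, so $x\in I_{a_1}$ and $g_p$ acts on $x$ through the branch indexed by $k=a_1$; in particular the parity hypothesis in the statement (parity of $a_1$) is exactly the parity of the interval index that selects the branch.

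If $a_1$ is even, then $g_p(x)=px-a_1$, and since $px=a_1+\sum_{i\ge 1}a_{i+1}p^{-i}$, subtracting $a_1$ leaves the shifted series $\sum_{i\ge 1}a_{i+1}p^{-i}$, which is the claimed expression. If $a_1$ is odd, then $g_p(x)=a_1+1-px=1-\sum_{i\ge 1}a_{i+1}p^{-i}$; here I would invoke the identity $\sum_{i\ge 1}(p-1)p^{-i}=1$ to rewrite $1-\sum_{i\ge 1}a_{i+1}p^{-i}=\sum_{i\ge 1}(p-1-a_{i+1})p^{-i}$, which is the second case. All the series involved are absolutely convergent, so the termwise rearrangements are legitimate.

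The only point requiring a little care is that the base-$p$ representation is not unique at the breakpoints $x=k/p$ (one expansion ending in all $0$'s, another ending in all $(p-1)$'s), so one should check the formula is independent of the chosen expansion. I expect this to be the main, though minor, obstacle: if $x=k/p$ with $k$ even, the expansion $a_1=k$, $a_i=0$ for $i\ge 2$ gives $g_p(x)=0$ via the even branch, while the expansion $a_1=k-1$ (odd), $a_i=p-1$ for $i\ge 2$ gives $\sum_{i\ge 1}(p-1-(p-1))p^{-i}=0$ via the odd branch, and the two agree; the case $k$ odd is symmetric. Alternatively one may simply note that $g_p$ is continuous, so its value does not depend on the representation. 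With this consistency checked, the proposition follows.
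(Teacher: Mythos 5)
Your proof is correct and follows essentially the same route as the paper: locate $x$ in the interval $\left[\frac{a_1}{p},\frac{a_1+1}{p}\right]$, apply the corresponding linear branch of Definition~\ref{defg}, and in the odd case rewrite $1-\sum_{i\ge 1}a_{i+1}p^{-i}$ using $1=\sum_{i\ge 1}(p-1)p^{-i}$. Your additional check that the formula is independent of the choice of base-$p$ expansion at the breakpoints is a sensible extra precaution the paper omits, but it does not change the argument.
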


\begin{proof} 
Notice that $\frac{a_1}{{p}}\leq x\leq \frac{a_1+1}{{p}}$. If  $a_1$  is even, then  $g(x)=-a_1+px=\sum_{i\ge 1} \frac{a_{i+1}}{p^i}$. If $a_1$  is odd, then $$g(x)=a_1+1-px=1-\left(\sum_{i\ge 1}\frac{a_{i+1}}{p^i}\right)=\sum_{i\ge 1}\frac{p-1-a_{i+1}}{p^i}.$$
\end{proof}

\begin{example}
The function $g_2$ permutes the fixed points in $\FP(g_2^3)$ as follows:
\begin{center}
$0.\overline{000}_2 \rightarrow 0.\overline{000}_2$

$0.\overline{001110}_2 \rightarrow 0.\overline{011100}_2 \rightarrow 0.\overline{111000}_2\rightarrow 0.\overline{001110}_2$ 

$0.\overline{010}_2 \rightarrow 0.\overline{100}_2 \rightarrow 0.\overline{110}_2\rightarrow 0.\overline{010}_2$ 

$0.\overline{101010}_2 \rightarrow 0.\overline{101010}_2$
\end{center}
\end{example}



\section{Bijection $B_\alpha$ }\label{sbij}


The goal of this section is to prove Theorem \ref{t2}.
Before, we need to prove the following propositions that are alternative representations of  $\pi_{p^n}(k)$  using the base $p$ expression of $k$. 

\begin{proposition}\label{pibasep}
If $k=(a_1a_2\ldots a_n)_p$ (where $k<p^n$), then $\pi_{p^n}(k)=(b_1 b_2\ldots b_n)_p$ where $b_1=a_1$, and $b_i=a_i$ when $b_1+\cdots +b_{i-1}$ is even or $b_i=p-1-a_i$ when $b_1+\cdots +b_{i-1}$ is odd.
\end{proposition}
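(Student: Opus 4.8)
The plan is to prove this by induction on $n$, directly unwinding the recursive definition of $\pi_{p^n}$ (Definition \ref{pipn}). The base case $n=1$ is immediate: $\pi_{p^1}$ is the identity, so $\pi_{p}(k)=k=(a_1)_p$ and $b_1=a_1$, which matches the statement (the empty sum preceding $b_1$ being $0$, hence even).

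For the inductive step, write $k=(a_1a_2\ldots a_n)_p$ and set $a:=a_1$ and $k':=k-a_1p^{n-1}=(a_2\ldots a_n)_p$, so that $a\,p^{n-1}\le k<(a+1)p^{n-1}$. I treat the two branches of the recursion separately. If $a_1$ is even, then $\pi_{p^n}(k)=\pi_{p^{n-1}}(k')+a_1p^{n-1}$, so in base $p$ the digits of $\pi_{p^n}(k)$ are $a_1$ followed by the digits of $\pi_{p^{n-1}}(k')$. Applying the induction hypothesis to $k'$ describes those trailing digits, and since $b_1=a_1$ is even, the partial sum $b_1+\cdots+b_{i-1}$ has the same parity as $b_2+\cdots+b_{i-1}$ for every $i\ge 2$; hence the rule distinguishing $a_i$ from $p-1-a_i$ carries over verbatim.

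If $a_1$ is odd, then $\pi_{p^n}(k)=\pi_{p^{n-1}}(p^{n-1}-1-k')+a_1p^{n-1}$. Here I first observe that $p^{n-1}-1-k'=((p-1-a_2)(p-1-a_3)\ldots(p-1-a_n))_p$, the digitwise complement of $k'$ (a valid base-$p$ string since $0\le a_i\le p-1$, and $p^{n-1}-1=((p-1)\cdots(p-1))_p$). Applying the induction hypothesis to this complemented string and then prepending the odd digit $b_1=a_1$ introduces two interacting sign changes: prepending an odd $b_1$ reverses the parity of every partial sum $b_1+\cdots+b_{i-1}$ relative to the inner partial sums, while the inner complement has replaced each $a_i$ by $p-1-a_i$. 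I will check that these two effects compose exactly to the asserted rule: when $b_1+\cdots+b_{i-1}$ is even the corresponding inner partial sum is odd, so the induction hypothesis on the complemented input yields $p-1-(p-1-a_i)=a_i$; when $b_1+\cdots+b_{i-1}$ is odd, it yields $p-1-a_i$.

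The only delicate point is this parity bookkeeping in the odd branch — tracking that the flip coming from an odd leading digit and the flip coming from the inner complement cancel or reinforce as needed. Everything else is a routine translation between the recursion of Definition \ref{pipn} and base-$p$ representations, so I would keep that part brief.
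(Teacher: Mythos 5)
Your proposal is correct and follows essentially the same route as the paper's own proof: induction on the number of base-$p$ digits, splitting on the parity of $a_1$, observing that $p^{n-1}-1-k'$ is the digitwise complement of $k'$, and checking that the parity flip from prepending an odd $b_1$ composes with the inner complement to give exactly the stated rule. The parity bookkeeping you flag as the delicate point is handled the same way in the paper, so no changes are needed.
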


\begin{proof}
Notice that  $a_1 p^{n-1}\le k<(a_1+1)p^{n-1}$. Also, it holds that $\pi_{p^n}(k)=(b_1 b_2\ldots b_n)_p$  has an expression in base $p$ with $n$ digits, where $b_1=k=a_1$ that accounts for the term $kp^{n-1}$ adding up, and all other $b_i$ depend on $\pi_{p^{n-1}}(k-a_1p^{n-1})$ in case $a_1$ is even or on $\pi_{p^{n-1}}(p^{n-1}-(k-a_1p^{n-1})-1)$ in case $a_1$ is odd.

Then if $a_1$ is even, by a similar argument, it holds that $\pi_{p^{n-1}}(k-a_1p^{n-1})$ in base $p$ has the same first digit as $k-a_1p^{n-1}=(a_2\ldots a_n)_p,$ namely $a_2$. Then $b_2=a_2$ and in general it holds that  $\pi_{p^{n-1}}(k-a_1p^{n-1})=(b_2\ldots b_n)_p$.   
By an inductive argument on $n$, the number of digits of $k$ in base $p$, we can assume that the result holds for $k-a_1p^{n-1}$ and then  $b_i=a_i$ when $b_2+\cdots +b_{i-1}$ is even or $b_i=p-1-a_i$ when $b_2+\cdots +b_{i-1}$ is odd. Since $b_1=a_1$ is even, the result holds as well for $k$.

On the other hand, notice that if  $p^{n-1}-1-(i-a_1p^{n-1})=(c_2\ldots c_n)_p$ every digit $c_i$ is the complement of $a_i$, meaning by this that $c_i=p-1-a_i$. This is true since all digits of $p^{n-1}-1$ in base $p$ are $p-1$.
 
Therefore if $a_1$ is odd, then $b_2=c_2=p-1-a_2$. Again by an inductive analysis we assume that  $b_i=c_i$ when $b_2+\cdots +b_{i-1}$ is even or $b_i=p-1-c_i$ when $b_2+\cdots +b_{i-1}$ is odd. Since $b_1=a_1$ is odd, then $b_i=c_i=p-1-a_i$ when $b_1+b_2+\cdots +b_{i-1}$ is odd or $b_i=p-1-c_i=a_i$ when $b_1+b_2+\cdots +b_{i-1}$ is even.
\end{proof}

 Notice that for $p$ odd, taking $p-1$ complement doesn't change the parity of the digits, and therefore we could use the parity of $a_1+\cdots +a_{i-1}$ to create the two cases in the previous proposition. In case $p=2$ things are a little bit different.

\begin{proposition}\label{pibasepeven}
If $p=2$ and $i=(a_1a_2\ldots a_n)_2$, then $\pi_{p^n}(i)=(b_1 b_2\ldots b_n)_2$ where $b_1=a_1$, and $b_i=0$ if $a_{i-1}=a_i$ or $b_i=1$ otherwise.
\end{proposition}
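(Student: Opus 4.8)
The plan is to derive Proposition \ref{pibasepeven} directly from Proposition \ref{pibasep} specialized to $p=2$, so essentially no new recursion is needed. In the $p=2$ case, Proposition \ref{pibasep} says $b_1=a_1$ and, for $i\ge 2$, $b_i=a_i$ when $b_1+\cdots+b_{i-1}$ is even and $b_i=1-a_i$ when $b_1+\cdots+b_{i-1}$ is odd. So the entire content of the new statement is the claim that the single digit $b_i$ equals the indicator $[a_{i-1}\neq a_i]$, i.e. that $b_i = a_{i-1}\oplus a_i$ (XOR). The key observation is that knowing $b_i$ in terms of the parity of $b_1+\cdots+b_{i-1}$ is a telescoping/running-sum description, and a running XOR of the digits $a_1,\dots,a_{i-1}$ collapses.

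First I would set $s_{i-1} := b_1+\cdots+b_{i-1} \bmod 2$ and prove by induction on $i$ that $s_{i-1} \equiv a_1 + a_{i-1} \pmod 2$ — equivalently $s_{i-1} = a_1 \oplus a_{i-1}$. The base case $i=2$ gives $s_1 = b_1 = a_1 = a_1\oplus a_1 \oplus a_1$; more usefully, check $i=2$ directly against the target formula. For the inductive step, assume $s_{i-1}=a_1\oplus a_{i-1}$; then by Proposition \ref{pibasep}, $b_i = a_i$ if $s_{i-1}=0$ and $b_i=1-a_i$ if $s_{i-1}=1$, so in both cases $b_i = a_i \oplus s_{i-1} = a_i\oplus a_1\oplus a_{i-1}$. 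Now compute $s_i = s_{i-1}\oplus b_i = (a_1\oplus a_{i-1})\oplus(a_i\oplus a_1\oplus a_{i-1}) = a_i$; wait — that gives $s_i = a_i$, not $a_1\oplus a_i$, so the right invariant is actually $s_i = a_i$ for $i\ge 1$ (indeed $s_1=b_1=a_1$). With the corrected invariant $s_{i-1}=a_{i-1}$, Proposition \ref{pibasep} immediately yields $b_i = a_i\oplus a_{i-1}$, which is exactly "$b_i=0$ if $a_{i-1}=a_i$, and $b_i=1$ otherwise," completing the proof.

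So the clean writeup is: establish by induction that $b_1+\cdots+b_{i-1}\equiv a_{i-1}\pmod 2$ for all $i\ge 2$ (base case $b_1 = a_1$; step using $b_i\equiv a_i + (b_1+\cdots+b_{i-1}) \equiv a_i + a_{i-1}\pmod 2$ and hence $b_1+\cdots+b_i\equiv a_{i-1}+a_i+a_{i-1}\equiv a_i$), then plug this parity back into Proposition \ref{pibasep} to get the stated description of each $b_i$. I do not anticipate a genuine obstacle here; the only thing to be careful about is the bookkeeping of indices and the fact that "$p-1-a_i$" becomes "$1-a_i$" for $p=2$, which coincides with the bit-flip $a_i\oplus 1$, so that $b_i = a_i \oplus (b_1+\cdots+b_{i-1}\bmod 2)$ holds uniformly (including $i=1$, reading the empty sum as $0$). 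The mildly delicate point is just making sure the induction is anchored correctly and that the running-sum invariant is stated with the right shift in the index.
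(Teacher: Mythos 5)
Your proposal is correct and takes essentially the same route as the paper: both specialize Proposition \ref{pibasep} to $p=2$ and run an induction whose real content is the invariant $b_1+\cdots+b_{i-1}\equiv a_{i-1}\pmod 2$, which the paper phrases as ``an even (resp.\ odd) number of changes among the $a_i$ forces $a_{j-1}=0$ (resp.\ $1$)'' and you phrase as the running-XOR identity $s_{i-1}=a_{i-1}$. Your version is a slightly cleaner statement of the same argument; just delete the mid-proof self-correction about the wrong invariant $a_1\oplus a_{i-1}$ from the final writeup.
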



\begin{proof}
 We prove it inductively. From Proposition \ref{pibasep}, we know that $a_1=b_1$. We assume that $b_i=0$ if $a_{i-1}=a_i$ or $b_i=1$ otherwise, for values of $i$ smaller than $j$ and check that the result holds for $i=j$. We divide in two cases.
\begin{itemize}
    \item If $b_1+\ldots +b_{j-1}$ is even, then by induction hypothesis, there is an even number of changes on the $a_i$, and therefore $a_{j-1}=0$. Also, from Proposition \ref{pibasep}, we see that $b_j=a_j$. It is clear that if $a_j=0$, then $a_{j-1}=a_j$ and $b_j=0$, while if $a_j=1$, then $a_{j-1}\neq a_j$ and $b_j=1$.

    \item If $b_1+\ldots +b_{i-1}$ is odd, then we have an odd number of changes and $a_{j-1}=1$. Also, from Proposition \ref{pibasep}, we see that $b_j=1-a_j$. If $a_j=0$, $b_j=1$ and $a{j-1}\neq a_j$ while if  $a_j=1$, then $b_j=0$ and $a_{j-1}= a_j$.
\end{itemize}
In all cases we check that the result holds. By induction, it is true for all $i\le n$.
\end{proof}

\begin{proof}[Proof of Theorem \ref{t2}]
Consider a fixed point $x_k$ for $k=(a_1\ldots a_n)_p$.  From Proposition \ref{gfixedpts} and  \ref{periodicfp} we can express  $x_k=(0.\overline{a_1\ldots a_na'_1\ldots a'_n})_p$ where $a'_i=a_i$ in case that $(a_1\ldots a_n)_p$ is even, or $a'_i=p-1-a_i$ if $(a_1\ldots a_n)_p$ is odd.

Denote $\pi_{p^n}(k)=(b_1 b_2\ldots b_n)_p,$. Then by Proposition  \ref{pibasep} we have that
$b_1=a_1$, and $b_i=a_i$ when $b_1+\cdots +b_{i-1}$ is even or $b_i=p-1-a_i$ when $b_1+\cdots +b_{i-1}$ is odd. Then $(B_\alpha(x_k))^p=\alpha^{p(b_1\ldots b_n)_p}=\alpha^{(b_1\ldots b_n0)_p}=\alpha^{(b_2\ldots b_nb_1)_p}$, since $\alpha^{p^n}=\alpha$ in the finite field $\F_{p^n}$. We assume $k\neq 0$, since the case $k=0$ can be easily checked.

Now we consider two cases, depending if $a_1$ is even or odd.

\begin{itemize}
\item When $a_1$ is even:
  Then $g_p(x_i)=(0.\overline{a_2\ldots a_na'_1\ldots a'_na_1})_p=x_{(a_2\ldots a_na'_1)_p}$, due to Proposition \ref{ginbasep}. We denote $B_\alpha(g_p(x_k))=\alpha^{(d_2 \ldots d_nd_1)_p}$, where $(d_2 \ldots d_nd_1)_p=\pi_{p^n}((a_2\ldots a_na'_1)_p)$, 
  with $d_2=a_2$, $d_i=a_i$ if $d_2+\cdots +d_{i-1}$ is even or $d_i=p-1-a_i=c_i$ if $d_2+\cdots +d_{i-1}$ is odd, and $d_1=a'_1$ if $d_2+\cdots +d_{n}$ is even, or $d_1=p-1-a'_1=c'_1$ if $d_2+\cdots +d_{n}$ is odd.

Now we need to check that $d_i=b_i$ for $i=1, \ldots,n$.
 Inductively we can check that $b_i=d_i$ for $i\le 2$. Since $a_1$ is even, $b_2=a_2$.  For $i>2$, the definition of $d_i$ and $b_i$ are similar, with  cases depending on the parity of $d_2+\cdots +d_{i-1}$ and of $b_1+\cdots +b_{i-1}$. Since $b_1=a_1$ is even, by the induction hypothesis, both will have the same parity, and therefore $b_i=d_i$. We still need to check that $b_1=d_1$, we further consider two cases:

\begin{itemize}
\item If $p$ is odd, we know that $b_i$ and $a_i$ have the same parity. Consider two cases: If $d_2+\cdots +d_{n}=b_2+\cdots +b_{n}$ is even, then $d_1=a'_1$. Also $(a_1\ldots a_n)_p$ is even since $b_i$ and $a_i$ have the same parity, and  therefore $d_1=a'_1=a_1=b_1$. 

If $d_2+\cdots +d_{n}=b_2+\cdots +b_{n}$ is odd, then $d_1=p-1-a'_1$. Also $(a_1\ldots a_n)_p$ is odd in this case, and then $a'_i=p-1-a_i$, and therefore $d_1=a_1=b_1$.

\item  If $p=2$,  $(a_1\ldots a_n)_p$ has the same parity as $a_n$. Clearly $a_1=0$ since it is even. If $a_n=0$, then $d_1=a_1=0=b_1$. If $a_n=1$, then $a'_1=1$, and by Proposition \ref{pibasepeven}, $d_1=0$. Since $b_1=a_1=0$. Otherwise, if $a_n=1$, then $a'_1=1-a_1=1.$ Again in this case we conclude that $d_1=0=b_1$.

\end{itemize}

\item When $a_1$ is odd:
 In this case $g_p(x_i)=(0.\overline{c_2\ldots c_nc'_1\ldots c'_nc_1})_p=x_{(c_2\ldots c_nc'_1)_p}$, where $c_i=p-1-a_i$ and $c'_i=p-1-a'_i,$  due to Proposition \ref{ginbasep}. We denote $B_\alpha(g_p(x_k))=\alpha^{(d_2 \ldots d_nd_1)_p}$, where $(d_2 \ldots d_nd_1)_p=\pi_{p^n}((c_2\ldots c_nc'_1)_p)$. 

Again we  check  inductively that $d_i=b_i$ for $i=1, \ldots,n$.
Since $a_1$ is odd,  $b_2=p-1-a_2=c_2$. Also $d_2=c_2$, therefore $b_2=d_2$.  For $i>2$, the definition of $d_i$ and $b_i$ have cases depending on the parity of $d_2+\cdots +d_{i-1}$ and of $b_1+\cdots +b_{i-1}$. Since $b_1=a_1$ is odd, by the induction hypothesis, both will have different parity. 
If $d_2+\cdots +d_{i-1}$ is even, then $d_i=c_i$, while $b_1+\cdots +b_{i-1}$ is odd and then $b_i=p-1-a_i=c_i$. Similarly, if $d_2+\cdots +d_{i-1}$ is odd, then $d_i=p-1-c_i=a_i$, while $b_1+\cdots +b_{i-1}$ is even and then $b_i=a_i$. 
Therefore $b_i=d_i$. We still need to check that $b_1=d_1$.
 
\begin{itemize}
\item If $p$ is odd, we know that $b_i$ and $a_i$ have the same parity. Consider two cases: If $d_2+\cdots +d_{n}=b_2+\cdots +b_{n}$ is even, then $d_1=c'_1$. Now $(a_1\ldots a_n)_p$ is odd since $b_i$ and $a_i$ have the same parity for $i\le 2$  and here he assume $a_1$ and $p$ to be odd.  Therefore $b_1=p-1-a'_1=c'_1=d_1$. 

If $d_2+\cdots +d_{n}=b_2+\cdots +b_{n}$ is odd, then $d_1=p-1-a'_1$, and $(a_1\ldots a_n)_p$ is even now. Then $a'_i=p-1-a_i$, and therefore $d_1=a_1=b_1$.

\item  If $p=2$,  $(a_1\ldots a_n)_p$ has the same parity as $a_n$. This time $a_1=1$ since it is odd. If $a_n=0$,  $d_1=a_1=1=b_1$. If $a_n=1$, then $a'_1=0$, and by Proposition \ref{pibasepeven}, $d_1=1$, since $a_n\neq a'_1$. It holds then that $b_1=a_1=1=d_1$. Otherwise, if $a_n=1$, then $a'_1=1-a_1=0.$ Again in this case we conclude that $d_1=1=b_1$.
\end{itemize}
\end{itemize}
\end{proof}

\begin{example}
  Using \emph{python} and \emph{SageMath} math \cite{sage} we were are able  to compute  our bijection. We present here the case $p=2$, $n=4$.  For this, by default, \emph{sage}  selects as irreducible polynomial of order $n$ the Conway polynomials,  those polynomials are used here as well. For more information about Conway polynomials and other possible choices for irreducible polynomials in finite field algorithmic constructions, you can see \cite{conway}. 
Since the Conway polynomial for a finite field is chosen so as to be compatible with the Conway polynomials of each of its sub-fields \cite{conway}, our bijection using those is compatible within the corresponding sub-fields. 

  We also plot the orbits of the fixed points of $g_p^n$ by the action of $g_p$ for $p=2,\,n=4$ and for $p=3,\,n=3$.

\begin{tabular}{ |p{1cm}|p{3.5cm}|p{3.5cm}|p{1cm}|p{2.8cm}|  }
 \hline
 \multicolumn{5}{|c|}{Bijection $B_{\alpha}$ for $p=2$, $n=4$, and $\alpha^4+\alpha+1$ in $\F_{2^4}$}\\
 \hline
 $i$ & $x_i$ & $g_p(x_i)$  & $\pi_{p^n}(i)$ & $B_{\alpha}(x_i)$ \\
 \hline
0 & 0.0 & 0.0 & 0 & 0 \\
1 & 0.11764705882352941 & 0.23529411764705882 & 1 & $\alpha$ \\
2 & 0.13333333333333333 & 0.26666666666666666 & 3 & $\alpha^3$ \\
3 & 0.23529411764705882 & 0.47058823529411764 & 2 & $\alpha^2$ \\
4 & 0.26666666666666666 & 0.5333333333333333 & 6 & $\alpha^3 + \alpha^2$ \\
5 & 0.35294117647058826 & 0.7058823529411765 & 7 & $\alpha^3 + \alpha + 1$ \\
6 & 0.4 & 0.8 & 5 & $\alpha^2 + \alpha$ \\
7 & 0.47058823529411764 & 0.9411764705882353 & 4 & $\alpha + 1$ \\
8 & 0.5333333333333333 & 0.9333333333333333 & 12 & $\alpha^3 + \alpha^2 + \alpha + 1$ \\
9 & 0.5882352941176471 & 0.8235294117647058 & 13 & $\alpha^3 + \alpha^2 + 1$ \\
10 & 0.6666666666666666 & 0.6666666666666667 & 15 & 1 \\
11 & 0.7058823529411765 & 0.588235294117647 & 14 & $\alpha^3 + 1$ \\
12 & 0.8 & 0.3999999999999999 & 10 & $\alpha^2 + \alpha + 1$ \\
13 & 0.8235294117647058 & 0.3529411764705883 & 11 & $\alpha^3 + \alpha^2 + \alpha$ \\
14 & 0.9333333333333333 & 0.1333333333333333 & 9 & $\alpha^3 + \alpha$ \\
15 & 0.9411764705882353 & 0.11764705882352944 & 8 & $\alpha^2 + 1$ \\

 \hline
\end{tabular}

\begin{figure}[h]
    \centering
    \includegraphics[width=10.5cm]{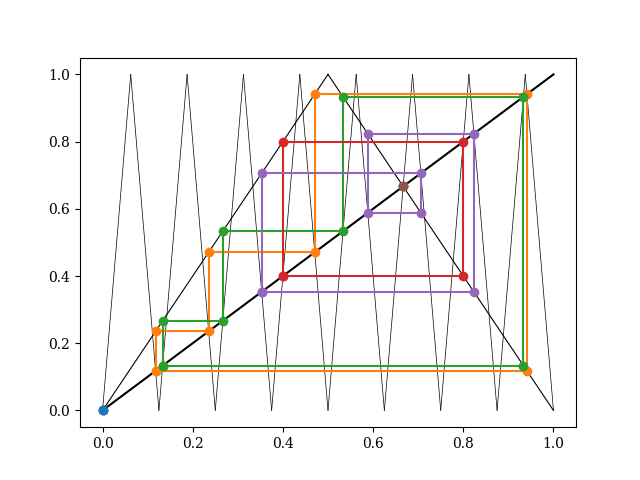}
    \label{gpn}
    \caption{Fixed points of $g_2^4$ and their orbits under $g_2$. }
\end{figure}


\begin{figure}[h]
    \centering
    \includegraphics[width=13.5cm]{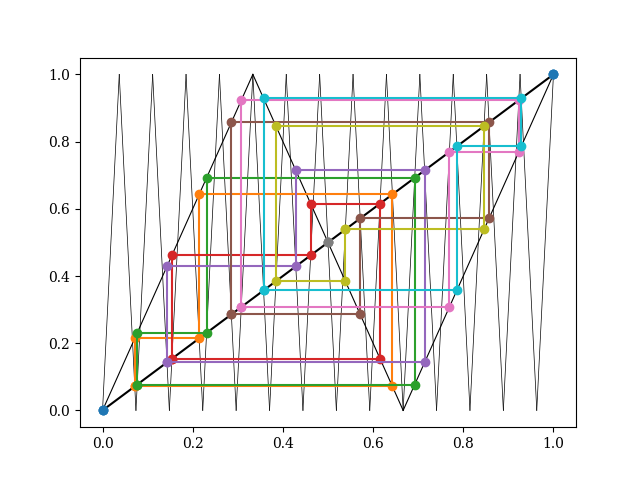}
    \label{gpn}
    \caption{Fixed points of $g_3^3$ and their orbits under $g_3$. }
\end{figure}

\end{example}

The bijection $B_\alpha$ also relates $\FP(g_p^n)$ with the multiplication in $\F_{p^n}$, for any given $\alpha$.

\begin{proposition}\label{t3} The bijection $B_\alpha:\FP(g_p^n)\rightarrow \F_{p^n}$ satisfy that $B_\alpha(x_i)B_\alpha(x_j)=B_\alpha(x_r)$ with 
$$r=\pi_{p^n}^{-1}((\pi_{p^n}(i)+\pi_{p^n}(j))\mod p^n-1),$$ for any $0< i,\,j < p^n,$ where the class representative modulo $p^n-1$ must be taken from $1$ to $p^n-1$.
\end{proposition}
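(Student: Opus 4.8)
The plan is to unwind the definition of $B_\alpha$ and reduce everything to the elementary fact that exponentiation by a primitive root turns addition of exponents modulo $p^n-1$ into multiplication in $\F_{p^n}$.

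First I would record the book-keeping facts. From Definition \ref{pipn}, taking $k=0$ (so $a=0$, which is even) and inducting on $n$, we get $\pi_{p^n}(0)=0$; hence the permutation $\pi_{p^n}$ of $\{0,1,\ldots,p^n-1\}$ restricts to a permutation of $\{1,2,\ldots,p^n-1\}$, and $\pi_{p^n}^{-1}$ is well defined there. Next, since $\alpha$ is a primitive root, $\F_{p^n}^\ast$ is cyclic of order $p^n-1$ generated by $\alpha$, so $e\mapsto\alpha^e$ depends only on $e \bmod (p^n-1)$ and, restricted to the representatives $\{1,\ldots,p^n-1\}$ (with residue $0$ represented by $p^n-1$), is a bijection onto $\F_{p^n}^\ast$ satisfying $\alpha^e\alpha^f=\alpha^{e+f}=\alpha^{(e+f)\bmod(p^n-1)}$. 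In other words, on $\FP(g_p^n)\setminus\{x_0\}$ the map $B_\alpha$ is the composition $x_k\mapsto k\mapsto \pi_{p^n}(k)\mapsto \alpha^{\pi_{p^n}(k)}$ of three bijections onto $\F_{p^n}^\ast$, the last of which intertwines addition mod $p^n-1$ with multiplication.

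Now fix $0<i,j<p^n$. Then $\pi_{p^n}(i),\pi_{p^n}(j)\in\{1,\ldots,p^n-1\}$, so
\[
B_\alpha(x_i)\,B_\alpha(x_j)=\alpha^{\pi_{p^n}(i)}\alpha^{\pi_{p^n}(j)}=\alpha^{\pi_{p^n}(i)+\pi_{p^n}(j)}=\alpha^{s},
\qquad s:=\bigl(\pi_{p^n}(i)+\pi_{p^n}(j)\bigr)\bmod(p^n-1),
\]
with $s$ taken in $\{1,\ldots,p^n-1\}$. Since $s\neq 0$, the index $r:=\pi_{p^n}^{-1}(s)$ lies in $\{1,\ldots,p^n-1\}$, so $x_r$ is a genuine fixed point, and $B_\alpha(x_r)=\alpha^{\pi_{p^n}(r)}=\alpha^{s}$. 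Comparing the two displays gives $B_\alpha(x_i)B_\alpha(x_j)=B_\alpha(x_r)$, which is exactly the asserted identity.

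This argument is a direct computation, so there is no genuine obstacle; the only points needing care are the edge cases, namely verifying $\pi_{p^n}(0)=0$ (so that $\pi_{p^n}$ is a permutation of the nonzero indices and $r\neq 0$), and fixing the convention that the residue modulo $p^n-1$ be chosen in $\{1,\ldots,p^n-1\}$ rather than $\{0,\ldots,p^n-2\}$, which is what makes $\alpha^s$ land on the correct representative exponent $\pi_{p^n}(r)$.
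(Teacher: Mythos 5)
Your proof is correct and follows essentially the same route as the paper's: a direct computation writing $B_\alpha(x_i)B_\alpha(x_j)=\alpha^{\pi_{p^n}(i)+\pi_{p^n}(j)}$ and identifying the exponent modulo $p^n-1$ with $\pi_{p^n}(r)$. Your version is actually a bit more careful than the paper's, which omits the explicit verification that $\pi_{p^n}(0)=0$ and the discussion of the residue convention.
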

\begin{proof}
  Since $B_\alpha(x_i)=\alpha^{\pi_{p^n}(i)}$ and $B_\alpha(x_j)=\alpha^{\pi_{p^n}(j)},$ therefore $B_\alpha(x_i)B_\alpha(x_j)=\alpha^{\pi_{p^n}(i)+\pi_{p^n}(j)}$. Since $\pi_{p^n}(r)=\pi_{p^n}(i)+\pi_{p^n}(j)$, then $B_\alpha(x_i)B_\alpha(x_j)=B_\alpha(x_r).$ 
\end{proof}
It would be good to connect our bijection with the sum structure of the finite field. We have some freedom for the generator $\alpha$ for that purpose, see \cite{conway} for other possible choices of a minimal polynomial for a generator.

\section{Bijection for other continuous functions}\label{scheb}

\begin{definition}
Let $u<v \in \R$. We say that such a function $f_p:[u,v]\rightarrow[u,v]$ \emph{goes up and down $p$ times} if it is a continuous function so that there are numbers $u=z_0<z_1<z_2<\cdots<z_p=v$ where $f_p(z_k)$ is always $u$ or $v$ for any $k$, and $f_p$ is strictly increasing or strictly decreasing when restricted to any interval $[z_k, z_{k+1}]$ for $k<p$. We will also require that the function $f_p$ has exactly $p$ fixed points, one on each interval $[z_k, z_{k+1}]$.
\end{definition}

Let $p$ be a prime number. We want to extend Theorem \ref{t2} to other functions $f_p:[u,v]\rightarrow [u,v]$ going up and down $p$ times. We restrict to the case that there is a continuous bijection $h:[0,1]\rightarrow [u,v]$ that is a homeomorphism between both functions $g_p$ and $f_p$ (so that  $f_p\circ h =h \circ g_p$). In this case we can easily extend our results to $f_p$ as well. For us this is enough to include another important family of continuous functions going up and down, namely the \emph{Chebyshev Polynomials}.

\begin{theorem}\label{t2h}
If $f_p$ \emph{goes up and down $p$ times} and there is a continuous bijection $h:[0,1]\rightarrow [u,v]$ so that  $f_p\circ h =h \circ g_p$, then there is a bijection $B_f:\FP(f_p^n)\rightarrow \F_{p^n}$ so that $(B_f(y_i))^p=B_f(f_p(y_i))$ for any fixed point $y_i\in \FP(f_p^n).$ 
\end{theorem}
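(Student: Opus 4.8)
The plan is to transport the entire structure through the homeomorphism $h$. Since $h:[0,1]\to[u,v]$ is a continuous bijection with $f_p\circ h = h\circ g_p$, an immediate induction gives $f_p^n\circ h = h\circ g_p^n$ for every $n$. Consequently $h$ restricts to a bijection between $\FP(g_p^n)$ and $\FP(f_p^n)$: if $g_p^n(x)=x$ then $f_p^n(h(x)) = h(g_p^n(x)) = h(x)$, and conversely $h^{-1}$ carries a fixed point of $f_p^n$ back to one of $g_p^n$; injectivity of $h$ makes this a bijection. I would state this as the first step, together with the observation that $h$ is automatically monotone (being a continuous injection on an interval), so it sends the ordered list $0=x_0<x_1<\cdots<x_{p^n-1}$ of fixed points of $g_p^n$ to an ordered list $y_0<y_1<\cdots<y_{p^n-1}$ (or its reverse) of fixed points of $f_p^n$, with $y_i = h(x_i)$ (resp.\ $y_i = h(x_{p^n-1-i})$).

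The second step is simply to compose: define $B_f := B_\alpha\circ h^{-1}:\FP(f_p^n)\to\F_{p^n}$, where $B_\alpha$ is the bijection from Theorem \ref{t2} for a chosen primitive root $\alpha$ of $\F_{p^n}$. As a composition of bijections, $B_f$ is a bijection. The third step is to verify the Frobenius-compatibility relation $(B_f(y_i))^p = B_f(f_p(y_i))$. Given $y_i\in\FP(f_p^n)$, write $y_i = h(x)$ with $x = h^{-1}(y_i)\in\FP(g_p^n)$. Then $f_p(y_i) = f_p(h(x)) = h(g_p(x))$, and since $g_p(x)$ is again a fixed point of $g_p^n$ (because $g_p^n(g_p(x)) = g_p(g_p^n(x)) = g_p(x)$), we get $h^{-1}(f_p(y_i)) = g_p(x)$. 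Therefore
\[
B_f(f_p(y_i)) = B_\alpha\bigl(h^{-1}(f_p(y_i))\bigr) = B_\alpha(g_p(x)) = (B_\alpha(x))^p = \bigl(B_\alpha(h^{-1}(y_i))\bigr)^p = (B_f(y_i))^p,
\]
where the middle equality is exactly Theorem \ref{t2}. This closes the argument.

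The only genuinely delicate point is making sure the homeomorphism hypothesis really is strong enough and stated correctly: one should check that $f_p$ \emph{goes up and down $p$ times} is not actually needed as a separate hypothesis for this theorem (it is implied, since $g_p$ goes up and down $p$ times and $h$ conjugates the dynamics), or alternatively that it is harmless to keep it. I would also remark that the proof makes no use of primality of $p$ beyond what Theorem \ref{t2} already assumes, and that a different choice of generator $\alpha$, or a different conjugating map $h$ when several exist, yields a different but equally valid $B_f$. The main (modest) obstacle is therefore not in the proof itself but in the preceding setup: establishing that such an $h$ exists for the Chebyshev polynomial $T_p$ on $[-1,1]$ — i.e.\ exhibiting the explicit semiconjugacy between $g_p$ and $T_p$ — which is the real content of this section and which I would handle separately before invoking Theorem \ref{t2h}.
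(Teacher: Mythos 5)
Your proof is correct and follows essentially the same route as the paper: transport the fixed points via $h$, set $B_f = B_\alpha \circ h^{-1}$, and push the Frobenius relation through the conjugacy using Theorem \ref{t2}. The only differences are cosmetic (you make explicit the induction $f_p^n\circ h = h\circ g_p^n$ and the fact that $g_p$ preserves $\FP(g_p^n)$, which the paper leaves implicit).
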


\begin{proof}
  A point $x\in [0,1]$ is a fixed point of $g_p^n$ if and only if $h(g_p^n(x))=h(x)$  since $h$ is bijective, and  $f_p^n(h(x))=h(g_p^n(x))=h(x)$ if and only if $h(x)$ is also a fixed point of $f_p^n$.
  Then, if $0=x_0<x_1<\ldots <x_{p^n-1}$ are the $p-1$ fixed points of the function $g_p^n$  and $y_i=h(x_i)$ for $0\le i<p^n$, then $y_0,\, y_1, \ldots ,\,y_{p^n-1}$ are the $p^n$ fixed points of the function $f_p^n$. Notice that  $h$ preserves order or completely inverts it. 

  We can simply take $B_f=B_\alpha\circ h^{-1}$, so that if $B_f(y_i)=B_\alpha(x_i)$ for all $0\le i<p^n.$
Notice that if $g_p(x_i)=x_j$, then $f_p(y_i)=y_j$, since $f_p(y_i)=f_p(h(x_i))=h(g_p(x_i))=h(x_j)=y_j$. Then $$(B_f(y_i))^p=(B_\alpha(x_i))^p=B_\alpha(g_p(x_i))=B_\alpha(x_j)=B_f(y_j)=B_f(f_p(y_i)).$$ \end{proof}

\subsection{Example: Chebyshev Polynomials}

As an example we will see how to extend our results to \emph{Chebyshev Polynomials.}
These are widely known polynomials, very important  in numerical analysis \cite{Chebyshev}.

\begin{definition}[Chebyshev polynomials]
 The family of Chebyshev polynomials $T_k \in \R[x]$  can be defined recursively by $T_0(x)=1$, $T_1(x)=x$, and $T_{k+1}(x)=2x T_k(x)-T_{k-1}(x)$.
\end{definition}

\begin{example}
  The next polynomials in the sequence are $T_2(x)=2x^2-1$, $T_3(x)=4x^3-3x$, $T_4(x)=8x^4-8x^2+1$, $T_5(x)=16x^5-20x^3+5x$, and $T_6(x)=32x^6-48x^4+18x^2-1$.
\end{example}
\begin{figure}[h]
    \centering
    \includegraphics[width=10.5cm]{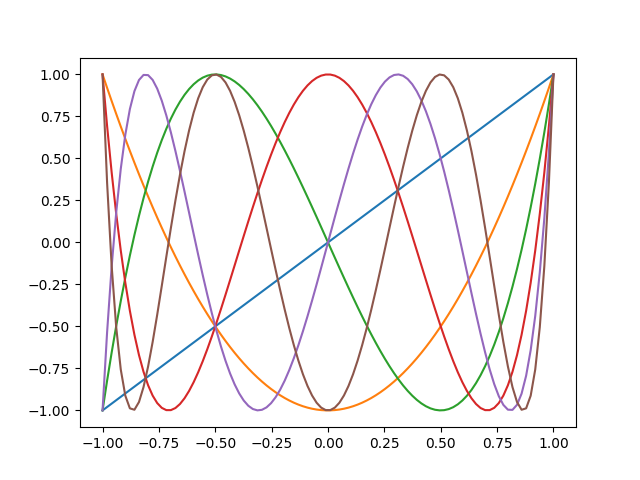}
    \label{gpn}
    \caption{Chebyshev Polynomials $T_1,\ldots,T_6$ on the interval $[-1,1]$.}
\end{figure}

\begin{proposition}\label{chebcos}
  The Chebyshev polynomial $T_n$ is the expression for $\cos(n\theta)$ in terms of $\cos(\theta)$, namely it holds that $$\cos(n\theta)=T_n(cos(\theta)).$$
\end{proposition}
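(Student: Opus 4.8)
The plan is to prove the identity $\cos(n\theta) = T_n(\cos\theta)$ by induction on $n$, using the recurrence $T_{n+1}(x) = 2x\,T_n(x) - T_{n-1}(x)$ together with a standard trigonometric identity. First I would check the base cases: for $n=0$ we have $\cos(0\cdot\theta) = 1 = T_0(\cos\theta)$, and for $n=1$ we have $\cos(\theta) = \cos\theta = T_1(\cos\theta)$. These match the initial data $T_0(x)=1$ and $T_1(x)=x$ defining the family.

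For the inductive step, I would assume the identity holds for all indices up to some $n \ge 1$, so in particular $\cos(n\theta) = T_n(\cos\theta)$ and $\cos((n-1)\theta) = T_{n-1}(\cos\theta)$. The key algebraic input is the product-to-sum formula
\[
\cos((n+1)\theta) + \cos((n-1)\theta) = 2\cos\theta\cos(n\theta),
\]
which follows from expanding $\cos((n\pm 1)\theta) = \cos(n\theta)\cos\theta \mp \sin(n\theta)\sin\theta$ and adding. Rearranging gives $\cos((n+1)\theta) = 2\cos\theta\cos(n\theta) - \cos((n-1)\theta)$, and substituting the induction hypotheses yields $\cos((n+1)\theta) = 2\cos\theta\,T_n(\cos\theta) - T_{n-1}(\cos\theta) = T_{n+1}(\cos\theta)$ by the defining recurrence. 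This closes the induction.

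There is essentially no serious obstacle here: the only subtlety is a bookkeeping one, namely that a two-term recurrence requires a two-step (strong-form) induction, so the base of the induction must cover both $n=0$ and $n=1$ before the recurrence can be invoked for $n \ge 1$. One could alternatively phrase it as: the proposition holds for $n$ and $n+1$ implies it holds for $n+2$. I would also remark that this immediately shows each $T_n$ restricted to $[-1,1]$ has the form $\cos(n\arccos x)$, which will be the relevant fact when applying Theorem \ref{t2h} to the Chebyshev maps in the sequel, since it exhibits $T_p$ on $[-1,1]$ as conjugate (via $x = \cos(\pi t)$) to the tent-like map $g_p$ on $[0,1]$.
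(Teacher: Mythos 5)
Your proof is correct and complete: the two base cases $T_0(x)=1$, $T_1(x)=x$ match $\cos(0\cdot\theta)=1$ and $\cos(\theta)$, and the identity $\cos((n+1)\theta)+\cos((n-1)\theta)=2\cos\theta\cos(n\theta)$ exactly mirrors the defining recurrence $T_{n+1}(x)=2xT_n(x)-T_{n-1}(x)$, so the two-step induction closes. The paper itself does not prove this proposition at all --- it simply cites the literature and remarks that the identity is often taken as the definition of $T_n$ --- so your argument supplies a standard, self-contained proof of a fact the paper treats as known. Your closing remark about $T_p$ on $[-1,1]$ being conjugate to $g_p$ via $x=\cos(\pi t)$ is accurate and is precisely how the paper uses the proposition later (in Proposition \ref{chebhg}).
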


\begin{corollary}\label{chebcorol}
  If $m$, $n$ are positive integers, then $T_m\circ T_n=T_{mn}$. Therefore also $T_p^n=T_{p^n}$.
\end{corollary}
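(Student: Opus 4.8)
\textbf{Proof proposal for Corollary~\ref{chebcorol}.}

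The plan is to derive $T_m \circ T_n = T_{mn}$ directly from the trigonometric characterization in Proposition~\ref{chebcos}, and then obtain the special case $T_p^n = T_{p^n}$ by an easy induction. First I would write, for $x = \cos\theta$ with $\theta \in [0,\pi]$ (so that every $x \in [-1,1]$ arises this way), the chain of equalities
\[
T_m(T_n(x)) = T_m(T_n(\cos\theta)) = T_m(\cos(n\theta)) = \cos(m n\theta) = T_{mn}(\cos\theta) = T_{mn}(x),
\]
where the second and fourth equalities are Proposition~\ref{chebcos} applied with angle $\theta$ and with angle $n\theta$ respectively (note $\cos(n\theta)$ is a valid cosine value, so $T_m$ may be evaluated at it via the identity), and the third is the elementary fact $\cos(mn\theta) = \cos(m\cdot(n\theta))$. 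This shows the polynomials $T_m \circ T_n$ and $T_{mn}$ agree at every point of the interval $[-1,1]$; since two polynomials that agree on an infinite set are identical, we conclude $T_m \circ T_n = T_{mn}$ as polynomials in $\R[x]$.

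For the second assertion, I would argue by induction on $n$ that $T_p^n = T_{p^n}$, where $T_p^n$ denotes the $n$-fold self-composition. The base case $n = 1$ is the tautology $T_p = T_{p^1}$. For the inductive step, assuming $T_p^{n-1} = T_{p^{n-1}}$, apply the composition identity with $m = p$ and the exponent $p^{n-1}$ in place of $n$:
\[
T_p^n = T_p \circ T_p^{n-1} = T_p \circ T_{p^{n-1}} = T_{p \cdot p^{n-1}} = T_{p^n}.
\]
This completes the induction and hence the proof.

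There is essentially no obstacle here: the only point requiring the slightest care is making sure that the variable substitution $x = \cos\theta$ covers the whole domain on which we wish to conclude equality of polynomials — which it does, since $\cos$ maps $[0,\pi]$ onto $[-1,1]$ — and that evaluating $T_m$ at $\cos(n\theta)$ is legitimate, which it is because $\cos(n\theta) \in [-1,1]$ is again a cosine and Proposition~\ref{chebcos} applies verbatim. Everything else is the standard "a polynomial identity holding on infinitely many points is a polynomial identity" principle together with a one-line induction.
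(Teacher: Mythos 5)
Your proof is correct. The paper does not prove this corollary itself (it simply cites a reference and notes that Proposition~\ref{chebcos} is often taken as the definition), and your argument --- deducing $T_m(T_n(\cos\theta))=\cos(mn\theta)=T_{mn}(\cos\theta)$ from Proposition~\ref{chebcos}, concluding equality of polynomials from agreement on the infinite set $[-1,1]$, and then inducting on $n$ for $T_p^n=T_{p^n}$ --- is exactly the standard derivation the citation points to.
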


Proposition \ref{chebcos} and Corollary \ref{chebcorol} can be found in \cite{Chebyshev}. In fact Proposition \ref{chebcos} is often used as a definition.
The polynomials $T_p$ restricted to the interval $[-1,1]$ are functions going up and down $p$ times. We want to extend our bijection to fixed points of Chebyshev polynomials as well. This is an easy application of Theorem \ref{t2h}.

\begin{proposition}\label{chebhg}
   The continuous bijection $h:[0,1]\rightarrow [-1,1]$ is given by $h(x)=\cos(\pi x)$. It holds that $T_p\circ h =h \circ g_p$ for any value of $p$.
\end{proposition}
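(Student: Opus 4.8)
### Proof proposal

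The plan is to verify the functional equation $T_p(h(x)) = h(g_p(x))$ directly for $x \in [0,1]$ by translating everything into trigonometric identities via $h(x) = \cos(\pi x)$ and Proposition~\ref{chebcos}. First I would compute the left-hand side: since $h(x) = \cos(\pi x)$, Proposition~\ref{chebcos} with $\theta = \pi x$ gives $T_p(h(x)) = T_p(\cos(\pi x)) = \cos(p\pi x)$. So the whole claim reduces to showing $\cos(p \pi x) = h(g_p(x)) = \cos\bigl(\pi\, g_p(x)\bigr)$ for every $x \in [0,1]$, i.e. that $p\pi x$ and $\pi\, g_p(x)$ have the same cosine.

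Next I would recall that $\cos\alpha = \cos\beta$ iff $\alpha \equiv \pm\beta \pmod{2\pi}$, so it suffices to show that $\pi\, g_p(x) \equiv \pm p\pi x \pmod{2\pi}$, equivalently that $g_p(x) \equiv \pm p x \pmod 2$. I would then argue by cases on the interval $I_k = [k/p,(k+1)/p]$ containing $x$, matching the definition of $g_p$ (Definition~\ref{defg}). On $I_k$ with $k$ even, $g_p(x) = px - k$, and since $k$ is even, $g_p(x) \equiv px \pmod 2$, so $\cos(\pi g_p(x)) = \cos(\pi(px-k)) = \cos(p\pi x)$ because subtracting an even integer multiple of $\pi$ does not change the cosine. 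On $I_k$ with $k$ odd, $g_p(x) = k + 1 - px$; here $k+1$ is even, so $g_p(x) \equiv -px \pmod 2$ and $\cos(\pi g_p(x)) = \cos(\pi(k+1-px)) = \cos(-p\pi x) = \cos(p\pi x)$, again since $k+1$ is an even integer. In both cases $h(g_p(x)) = \cos(p\pi x) = T_p(h(x))$, which is exactly the desired identity.

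The only points requiring a little care — and the closest thing to an obstacle — are the endpoints $x = k/p$, where $x$ lies in two adjacent intervals: one must check the two formulas for $g_p$ agree there (they do, giving value $0$ or $1$), and also that the ``$\pm$'' ambiguity in the cosine identity is harmless precisely because in each branch the relevant integer ($k$ or $k+1$) is even. I would also remark that $h$ is a genuine continuous bijection from $[0,1]$ onto $[-1,1]$ (it is strictly decreasing, with $h(0)=1$, $h(1)=-1$), so that Theorem~\ref{t2h} applies; combined with Corollary~\ref{chebcorol} ($T_p^n = T_{p^n}$) this immediately yields the bijection between $\FP(T_{p^n})$ and $\F_{p^n}$ with the Frobenius-compatibility property. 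No delicate estimates are needed; the whole proof is the elementary case analysis above.
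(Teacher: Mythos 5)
Your proof is correct and follows essentially the same route as the paper: reduce to $\cos(p\pi x)=\cos(\pi g_p(x))$ via Proposition~\ref{chebcos} and then split on the parity of $k$. In fact your version of the odd case is cleaner, since you correctly use $g_p(x)=k+1-px$ and the evenness of $k+1$, whereas the paper's own computation slips into writing $\cos(\pi-p\pi x)=\cos(p\pi x)$, which is not an identity.
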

\begin{proof}
  We need to verify that $T_p( \cos(\pi x)) =\cos(\pi g_p(x))$. Take $x\in [\frac kp, \frac{k+1}{p}]$. We recall Definition \ref{defg} and split in two cases.
  
   If $k$ is even, then $g_p(x)=px-k$ and then $\cos(\pi g_p(x))=\cos(p\pi x-k\pi)=\cos(p\pi x)$, since $\cos$ is $2\pi$-periodic and $k$ is even. But by Proposition \ref{chebcos} we have that $T_p( \cos(\pi x)) =\cos(p\pi x)=\cos(\pi g_p(x))$.

  If $k$ is odd, then $g_p(x)=k-px$ and then $\cos(\pi g_p(x))=\cos(k\pi-p\pi x)=\cos(\pi-p\pi x)=\cos(p\pi x)$, since $\cos$ is $2\pi$-periodic and $\cos(x)=\cos(\pi -x)$. Again by Proposition \ref{chebcos} we have that $T_p( \cos(\pi x)) =\cos(p\pi x)=\cos(\pi g_p(x))$. We conclude that $T_p\circ h =h \circ g_p.$ 
\end{proof}

\begin{corollary}
 There is a bijection $B_f:\FP(T_p^n)\rightarrow \F_{p^n}$ so that $(B_f(y_i))^p=B_f(T_p(y_i))$ for any fixed point $y_i\in \FP(T_p^n).$ 
\end{corollary}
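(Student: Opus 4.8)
The plan is to apply Theorem~\ref{t2h} directly with $f_p = T_p$. The hypotheses of that theorem require a function going up and down $p$ times together with a continuous bijection $h\colon[0,1]\to[u,v]$ intertwining $g_p$ and $f_p$, and both ingredients have just been supplied: the polynomial $T_p$ restricted to $[-1,1]$ goes up and down $p$ times (its critical values and endpoint values alternate between $\pm 1$, it is strictly monotone on each of the $p$ subintervals cut out by those critical points, and by a quick count using $\cos(p\pi x)=x$ it has exactly one fixed point in each), and Proposition~\ref{chebhg} gives $h(x)=\cos(\pi x)$ with $T_p\circ h = h\circ g_p$. I would open the proof by invoking Theorem~\ref{t2h} with $[u,v]=[-1,1]$ and citing Proposition~\ref{chebhg} for the intertwining relation.

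The one point that deserves an explicit sentence is why $T_p$ has \emph{exactly} $p$ fixed points, one per interval, since that is part of the definition of ``goes up and down $p$ times'' and is not literally among the results quoted verbatim above. The clean way is to note that $h=\cos(\pi\cdot)$ is a homeomorphism and $f$ has a fixed point in $h([\tfrac kp,\tfrac{k+1}{p}])$ if and only if $g_p$ has one in $[\tfrac kp,\tfrac{k+1}{p}]$; by Proposition~\ref{gfixedpts} (with $n=1$) the function $g_p$ has the $p$ fixed points $x_0<\dots<x_{p-1}$, exactly one in each $I_k$, so $T_p$ inherits exactly one fixed point in each of its own $p$ monotonicity intervals. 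Alternatively one can solve $\cos(p\pi x)=\cos(\pi x)$ on $[0,1]$ directly, but the transport argument via $h$ is shorter and matches the spirit of the section.

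With those two observations in place the corollary is immediate: Theorem~\ref{t2h} produces the bijection $B_f=B_\alpha\circ h^{-1}\colon \FP(T_p^n)\to\F_{p^n}$ (using $T_p^n=T_{p^n}$ from Corollary~\ref{chebcorol} so that $\FP(T_p^n)$ is what we expect), and the relation $(B_f(y_i))^p=B_f(T_p(y_i))$ is exactly the conclusion of Theorem~\ref{t2h} specialized to $f_p=T_p$. I would also remark that, just as in Proposition~\ref{t3}, this $B_f$ carries the multiplicative structure of $\F_{p^n}$ over to the fixed points of $T_{p^n}$ via the same permutation $\pi_{p^n}$, since $B_f$ and $B_\alpha$ differ only by the relabelling $h$.

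I do not expect a genuine obstacle here: the real content was already done in Theorem~\ref{t2h} and Proposition~\ref{chebhg}. The only thing to be careful about is bookkeeping — making sure the count ``exactly $p$ fixed points, one per interval'' is justified rather than assumed, and making sure the indexing $y_i=h(x_i)$ respects the fact that $h$ reverses order (so $y_0>y_1>\dots>y_{p^n-1}$), which is harmless for the statement but worth a parenthetical so the reader is not confused when comparing with the $g_p$ case.
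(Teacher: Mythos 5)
Your proposal is correct and matches the paper's proof, which simply cites Proposition~\ref{chebhg} and Theorem~\ref{t2h}; your extra care in verifying that $T_p$ has exactly one fixed point per monotonicity interval (transported via $h$ from Proposition~\ref{gfixedpts}) is a reasonable addition but does not change the route.
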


\begin{proof}
  This is a consequence of Proposition \ref{chebhg} and Theorem \ref{t2h}.
\end{proof}

This bijection allows us to factor $T_p^n(x)-x$, since we know all fixed points via the bijection.

\begin{proposition}
  The Chebyshev polynomial $T_p^n$ can be expressed as 
  $$T_p^n(x)=x+\prod_{k=0}^{(p^n-1)/2}\left(x-\cos\left(\frac{2k\pi}{p^n-1}\right)\right)\left(x-\cos\left(\frac{(2k+1)\pi}{p^n+1}\right)\right).$$
  
\end{proposition}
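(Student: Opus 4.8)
The plan is to pin down every root of the degree-$p^n$ polynomial $T_p^n(x)-x$ explicitly and then write that polynomial as the corresponding product of linear factors. By Corollary \ref{chebcorol} we have $T_p^n=T_{p^n}$, and the recursion $T_{k+1}=2xT_k-T_{k-1}$ shows $\deg T_{p^n}=p^n$ with leading coefficient $2^{p^n-1}$, so $T_p^n(x)-x$ has that same degree and leading coefficient. Its roots are exactly the elements of $\FP(T_p^n)$, and by Proposition \ref{chebhg} together with the proof of Theorem \ref{t2h} (applied with the homeomorphism $h(x)=\cos(\pi x)\colon[0,1]\to[-1,1]$) these roots are $y_k=h(x_k)=\cos(\pi x_k)$, where $0=x_0<\dots<x_{p^n-1}$ are the fixed points of $g_p^n$ listed in Proposition \ref{gfixedpts}. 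Since there are exactly $p^n$ of them and they are pairwise distinct ($h$ is injective and the $x_k$ are distinct), this already gives $T_p^n(x)-x=2^{p^n-1}\prod_{k=0}^{p^n-1}(x-y_k)$, and it remains only to rewrite the right-hand side in the asserted closed form.

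The second step is the explicit substitution of Proposition \ref{gfixedpts}. For even $k$ we have $x_k=k/(p^n-1)$, hence $y_k=\cos\!\big(k\pi/(p^n-1)\big)$; for odd $k$ we have $x_k=(k+1)/(p^n+1)$, hence $y_k=\cos\!\big((k+1)\pi/(p^n+1)\big)$. Reindexing the even $k$ by $k=2j$ and the odd $k$ by $k+1=2j$ collects the roots into the two families $\cos\!\big(2j\pi/(p^n-1)\big)$ and $\cos\!\big(2j\pi/(p^n+1)\big)$ over the appropriate ranges of $j$, which is the displayed product. The same root set can be obtained directly from Proposition \ref{chebcos}: writing $x=\cos\theta$ with $\theta\in[0,\pi]$, the equation $T_{p^n}(x)=x$ becomes $\cos(p^n\theta)=\cos\theta$, i.e.\ $p^n\theta\equiv\pm\theta\pmod{2\pi}$, whose solutions in $[0,\pi]$ are precisely $\theta=2\pi m/(p^n-1)$ and $\theta=2\pi m/(p^n+1)$; this derivation is a convenient cross-check and makes the admissible ranges of $m$ transparent.

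The one place that needs genuine care — and the main obstacle — is the combinatorial bookkeeping that makes the product come out with exactly $p^n$ distinct linear factors and the correct leading constant. One must check: that each family is free of internal repetitions, which is immediate since $\theta\mapsto\cos\theta$ is strictly monotone on $[0,\pi]$; that the two families are disjoint, which for $p$ odd reduces to the coprimality of $(p^n-1)/2$ and $(p^n+1)/2$ (their difference is $1$); and that the total count is exactly $p^n$, which forces the precise endpoints of the two index ranges. In particular one must be careful not to list the roots $\cos 0=1$ and $\cos\pi=-1$ in both families, since whether $\pm1$ is a fixed point of $T_{p^n}$, and which family ``claims'' it, depends on the parity of $p^n$; for $p$ odd the ranges come out as $j=0,\dots,(p^n-1)/2$ in the first family and $j=1,\dots,(p^n-1)/2$ in the second, whereas for $p=2$ the quantity $(p^n-1)/2$ is not an integer and the two products should instead run over $j=0,\dots,2^{n-1}-1$ and $j=1,\dots,2^{n-1}$, so the formula is best read with $p$ odd. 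Finally one matches top-degree coefficients to recover the leading factor $2^{p^n-1}$. None of these steps is deep, but getting the ranges, the disjointness, and the leading constant simultaneously right is the real content of the argument.
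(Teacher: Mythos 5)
Your proposal is correct and follows essentially the same route as the paper: identify the roots of $T_{p^n}(x)-x$ as the images $y_k=\cos(\pi x_k)$ of the fixed points of $g_p^n$ under the conjugating homeomorphism $h$ of Proposition~\ref{chebhg}, factor into linear terms, then substitute the explicit values from Proposition~\ref{gfixedpts} and pair the even-index and odd-index roots. The place where you go beyond the paper is exactly where the paper is careless, and your extra care is not optional: since $T_{p^n}$ has leading coefficient $2^{p^n-1}$, the correct factorization is $T_{p^n}(x)-x=2^{p^n-1}\prod_{k}\left(x-\cos(\pi x_k)\right)$, whereas the paper's proof writes the monic product and the displayed proposition drops the constant altogether; likewise the printed product runs over $k=0,\dots,(p^n-1)/2$ and therefore contains $p^n+1$ linear factors (one too many for a degree-$p^n$ polynomial, and the upper limit is not even an integer when $p=2$), and the cosine arguments are missing their factors of $\pi$. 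Your insistence on matching the top-degree coefficient, pinning down the index ranges, checking that the two families of roots are distinct and disjoint, and restoring the $\pi$'s is precisely the bookkeeping needed to turn the paper's one-line ``arranging the roots in pairs'' into a proof of a correctly stated identity.
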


\begin{proof}
  The fixed points of $T_p^n(x)-x$ are of the form $y_i=h(x_i)=\cos(\pi x_i)$, where $x_0<x_1<\ldots <x_{p^n-1}$  are the  fixed points of $g_p^n$, as it can be observed in the proof of Theorem \ref{t2h}. Therefore
$$T_p^n(x)-x=\prod_{i=0}^{p^n-1}(x-\cos(x_i)).$$
 
  By Proposition \ref{gfixedpts}, and arranging the roots in pairs we obtain the desired expression.
\end{proof}

For other functions $f_p$ that go up and down $p$ times, we would like to know if there is a continuous bijection $h:[0,1]\rightarrow [u,v]$ so that $f_p\circ h =h \circ g_p$. It is possible to find the values of the function $h(x)$ for rational values $x$ with denominator of the form $p^k$, recursively on $k$. In these values end up to be increasing, and if such function $h$ exists, for all $x\in [0,1]$,  $h(x)$ must be the limit of the rational values approaching $x$. For us it is not clear that this limit always exist, or what extra conditions should be requested for $f_p$ to accomplish that.

\section{ Functions going up or down}\label{sgeneralud}

Another  direction in which we can generalize our bijection is  to look at functions that are not necessarily continuous, that go up or down $p$ times, but following different patterns.
\begin{definition}\label{defgI}
  Let $p$ be a prime number, and $I\subseteq \{0,1,2,\ldots,p-1\}$.
  The function  $g_{p,I}:[0,1]\rightarrow [0,1]$ given by
\[g_{p,I}(x)=
\begin{cases} 
{p}x-k, &\text{for $\frac{k}{{p}}\leq x< \frac{k+1}{{p}}$ with $k\in I$.}\\
k+1-{p}x, &\text{for $\frac{k}{{p}}\leq x< \frac{k+1}{{p}}$ with $k\notin I$.}
\end{cases} \]
\end{definition}
In this case $g_{p,I}$ is a piece-wise linear function where $I$ denotes the set of indices $k$ where $g_{p,I}$ is increasing. Notice that if $I$ is the set of even numbers, then $g_{p,I}=g_p$. Also, if  $I= \{0,1,2,\ldots,p-1\}$, then $g_{p,I}$ coincides with the fractional part of $px$.

The function $g_{p,I}^n$ is linear with slope $\pm {p^n}$ on each open interval $(\frac{k}{{p^n}}, \frac{k+1}{{p^n}})$.
\begin{definition}
  The set $I_{p^n}\subseteq \{0,1,2,\ldots,p^n-1\}$ is the set of indices where $g_{p,I}^n$ is increasing on the interval $(\frac{k}{{p^n}}, \frac{k+1}{{p^n}})$.
\end{definition}

Then we have $I=I_{p^1}$.
Using this definition we can find all fixed points of $g_{p,I}^n$ similar to what we did in  Proposition \ref{gfixedpts}.

\begin{proposition}\label{gIfixedpts}
There are exactly $p^n$ fixed points of $g_{p,I}^n$.
Let $x_0<x_1<\ldots <x_{p^n-1}$ be the fixed points of the function $g_{p,I}^n$.
Then, for $0\le k<p^n$ we have that 

\[
x_k=
\begin{cases}
  \frac{k}{p^n-1}, &\text{if $k\in I_{p^n}$}
  \\
  \frac{k+1}{p^n+1}, &\text{ otherwise.}
\end{cases} 
\]
\end{proposition}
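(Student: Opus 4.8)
The plan is to mimic the proof of Proposition \ref{gfixedpts}, replacing the parity condition by membership in $I_{p^n}$. First I would establish that $g_{p,I}^n$ is linear with slope $\pm\frac1{p^n}$ on each open interval $\left(\frac k{p^n},\frac{k+1}{p^n}\right)$; this is already noted in the excerpt, and the sign is $+$ precisely when $k\in I_{p^n}$ by definition of $I_{p^n}$. On such an interval $g_{p,I}^n$ agrees with an affine map that sends the interval onto $[0,1]$ (or $(0,1]$, $[0,1)$, matching the half-open conventions), so on the closure it has the form $x\mapsto p^n x-k$ when $k\in I_{p^n}$ and $x\mapsto k+1-p^n x$ when $k\notin I_{p^n}$, exactly as in Definition \ref{defgI} with $p$ replaced by $p^n$. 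I would justify this affine formula by an easy induction on $n$ using the composition $g_{p,I}^n=g_{p,I}\circ g_{p,I}^{n-1}$, or simply invoke that $g_{p,I}^n$ is piecewise linear with unit-onto pieces and solve for the intercept.

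Next I would solve the fixed-point equation on each piece. If $k\in I_{p^n}$, then $g_{p,I}^n(x)=x$ becomes $p^n x-k=x$, giving the unique solution $x_k=\frac k{p^n-1}$; one checks $\frac k{p^n}\le \frac k{p^n-1}$ and $\frac k{p^n-1}\le\frac{k+1}{p^n}$ (equivalently $k\le p^n-1$), so this solution indeed lies in the relevant interval. If $k\notin I_{p^n}$, then $g_{p,I}^n(x)=x$ becomes $k+1-p^n x=x$, giving $x_k=\frac{k+1}{p^n+1}$, and again $\frac k{p^n}\le\frac{k+1}{p^n+1}\le\frac{k+1}{p^n}$ holds. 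Thus each of the $p^n$ half-open intervals $\left[\frac k{p^n},\frac{k+1}{p^n}\right)$ contains exactly one fixed point, for $0\le k<p^n$, so there are exactly $p^n$ fixed points total and they are naturally indexed by $k$.

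Finally I would verify the claimed order $x_0<x_1<\dots<x_{p^n-1}$, i.e.\ that the fixed point arising from interval $k$ is smaller than the one from interval $k+1$. Since the fixed point from piece $k$ lies in $\left[\frac k{p^n},\frac{k+1}{p^n}\right)$ and the one from piece $k+1$ lies in $\left[\frac{k+1}{p^n},\frac{k+2}{p^n}\right)$, these half-open intervals are disjoint and correctly ordered, so the indexing by $k$ is already the increasing order; hence $x_k$ as defined by the case split is exactly the $k$-th fixed point.

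The main obstacle, such as it is, is bookkeeping at the interval endpoints: the half-open convention in Definition \ref{defgI} means one must be slightly careful about whether a fixed point can land exactly on a breakpoint $\frac k{p^n}$, and about whether the affine formula $x\mapsto p^nx-k$ versus $x\mapsto k+1-p^nx$ is the one in force at the solution. Checking $0<\frac k{p^n-1}$ vs.\ the endpoints (and that $\frac k{p^n-1}=\frac k{p^n}$ only when $k=0$, where $x_0=0$ is consistent) resolves this, but it is the only place the argument is not completely automatic. Everything else is the same linear-algebra computation as in Proposition \ref{gfixedpts}, just with the parity criterion abstracted into the set $I_{p^n}$.
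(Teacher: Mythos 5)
Your proof is correct and follows essentially the same route as the paper, which in fact gives no separate argument for this proposition but simply remarks that the proof is identical to that of Proposition \ref{gfixedpts}: solve the affine fixed-point equation on each subinterval, with membership in $I_{p^n}$ replacing the parity of $k$. Your extra care with the half-open endpoint conventions and the explicit verification of the ordering $x_0<x_1<\cdots<x_{p^n-1}$ goes slightly beyond what the paper writes down, but it is the same computation.
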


Also, analogue to \ref{ginbasep}, we get the following proposition that helps us to see how $g_{p,I}$ looks like in base $p$. 
\begin{proposition}\label{ginbasepI} If $x=\sum_{i\ge 1}  \frac{a_i}{p^i}$ for $0\le a_i \le p-1,$ then
\[g_{p,I}(x)=
\begin{cases} 
\sum_{i\ge 1} \frac{a_{i+1}}{p^i}, &\text{if $a_1\in I$  }\\
\sum_{i\ge 1}\frac{p-1-a_{i+1}}{p^i}, &\text{otherwise.}
\end{cases} \]
\end{proposition}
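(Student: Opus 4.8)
The statement to prove is Proposition~\ref{ginbasepI}, describing $g_{p,I}$ in base $p$. The plan is to mimic the proof of Proposition~\ref{ginbasep} exactly, with the only change being that the dichotomy ``$a_1$ even / $a_1$ odd'' is replaced by ``$a_1 \in I$ / $a_1 \notin I$''. The underlying algebra is identical because the two branches of $g_p$ in Definition~\ref{defg} and the two branches of $g_{p,I}$ in Definition~\ref{defgI} are the same formulas ($px-k$ and $k+1-px$); only the condition selecting which branch applies has changed.

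First I would fix $x = \sum_{i\ge 1} a_i p^{-i}$ with $0 \le a_i \le p-1$, and let $k = a_1$, so that $\frac{k}{p} \le x < \frac{k+1}{p}$ (using half-open intervals as in Definition~\ref{defgI}; one should note the boundary case where $x$ has a terminating expansion, but this is a measure-zero nuisance that does not affect fixed-point counting and can be handled by the usual convention on base-$p$ expansions). Then I split into two cases. If $a_1 \in I$, then $g_{p,I}(x) = px - a_1 = \sum_{i\ge 1} a_{i+1} p^{-i}$, which is simply the shift, giving the first branch. If $a_1 \notin I$, then $g_{p,I}(x) = a_1 + 1 - px = 1 - \sum_{i \ge 1} a_{i+1} p^{-i} = \sum_{i\ge 1} \frac{p-1-a_{i+1}}{p^i}$, using the identity $1 = \sum_{i\ge 1} (p-1) p^{-i}$; this gives the second branch.

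Since the computation is word-for-word the same as in Proposition~\ref{ginbasep} except for relabeling the case condition, there is essentially no obstacle here. The only point requiring a modicum of care is the half-open versus closed interval convention: $g_{p,I}$ is defined on $[\frac{k}{p}, \frac{k+1}{p})$, so a real number has a unique such $k$ except that one must be careful with base-$p$ expansions ending in an infinite string of $(p-1)$'s; choosing the canonical (non-terminating-in-$(p-1)$'s, i.e.\ the ``left-continuous'' or the standard) expansion makes $a_1 = \lfloor px \rfloor$ well-defined and the argument goes through. I would simply remark this in a parenthetical and otherwise present the two-case calculation verbatim.
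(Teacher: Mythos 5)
Your proof is correct and follows exactly the route the paper intends: the paper itself omits the argument, stating that "the proofs of previous propositions are just the same as before," and your computation is a verbatim adaptation of the proof of Proposition~\ref{ginbasep} with the parity condition replaced by membership in $I$. Your added remark about the half-open interval convention and ambiguous base-$p$ expansions is a reasonable extra precaution not present in the paper.
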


The proofs of the previous propositions are just the same as before.

We can prove analogous results to Theorem \ref{t1} and \ref{t2}, but this time we need to construct a new bijection $B_{\alpha,I}$  using a different permutation $\pi_{p^n,I}$, analogous to Definition \ref{pipn}.
\begin{definition}\label{pipnI}
The permutation  $\pi_{p^n,I}$ is defined recursively 
as follows: take $\pi_{p^1,I}$ to be the identity map from 0 to $p-1$. Then to define $\pi_{p^n,I}$, if $a p^{n-1}\le k<(a+1)p^{n-1}$ take
 \[\pi_{p^n,I}(k)=\begin{cases}
   ap^{n-1} + \pi_{p^{n-1},I}(k-ap^{n-1}) \text{ for $a\in I$}\\
   ap^{n-1} + \pi_{p^{n-1},I}(p^{n-1}-(k-ap^{n-1})-1)  \text{ for $a\notin I$}
\end{cases}\]
\end{definition}

\begin{example}
For $p=2$ and $p=3$, and $I=\varnothing$, this is  how $\pi_{p^n,I}$  permutes the numbers $0, 1, \dots ,p^n-1$ :

$\pi_{2^1,I}: 0, 1$

$\pi_{2^2,I}: 1, 0, 3, 2$

$\pi_{2^3,I}: 2, 3, 0, 1, 6, 7, 4, 5$

$\pi_{2^4,I}: 5,4,7,6,1,0,3,2, 13, 12, 15, 14, 9,8, 11, 10$

$\pi_{3^1,I}: 0, 1, 2$

$\pi_{3^2,I}: 2, 1, 0, 5, 4, 3, 8, 7, 6$

$\pi_{3^3,I}: 6, 7, 8, 3, 4, 5, 0, 1, 2,  15,  16, 17, 12, 13, 14, 9, 10, 11, 24, 25, 26, 21, 22, 23, 18, 19, 20$
\end{example}

The following propositions would be needed to prove Theorem \ref{t2I}.

\begin{proposition}\label{propI}  
 If $a_1 \in I$, then  $(a_1\ldots a_{n})_p \in I_{p^{n}}$ if and only if $(a_2\ldots a_{n})_p\in I_{p^{n-1}}$. On the other hand, if $a_1 \notin I$, then  $(a_1\ldots a_{n})_p \in I_{p^{n}}$ if and only if $(c_2\ldots c_{n})_p\notin I_{p^{n-1}}$where $c_i=p-1-a_i$ for $i=2, \ldots, n$. 
\end{proposition}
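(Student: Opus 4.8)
\textbf{Proof plan for Proposition \ref{propI}.}

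The plan is to unwind the recursive definition of $I_{p^n}$ through Proposition \ref{ginbasepI}, exactly mirroring how Proposition \ref{pibasep} was unwound for $\pi_{p^n}$. Recall that $k = (a_1a_2\ldots a_n)_p \in I_{p^n}$ precisely when $g_{p,I}^n$ is increasing on the open interval $(\frac{k}{p^n},\frac{k+1}{p^n})$, and that $g_{p,I}^n = g_{p,I}^{n-1}\circ g_{p,I}$. First I would note that on the interval $I_{a_1} = [\frac{a_1}{p},\frac{a_1+1}{p})$ the map $g_{p,I}$ is linear, increasing if $a_1\in I$ and decreasing if $a_1\notin I$, and it carries the subinterval $(\frac{k}{p^n},\frac{k+1}{p^n})$ onto an open subinterval of $(0,1)$ of length $p/p^n = 1/p^{n-1}$. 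So the whole composition $g_{p,I}^n$ is increasing on $(\frac{k}{p^n},\frac{k+1}{p^n})$ iff the sign of the slope of $g_{p,I}$ on $I_{a_1}$ times the sign of the slope of $g_{p,I}^{n-1}$ on the image interval is positive.

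The key step is to identify that image interval. If $a_1\in I$, then $g_{p,I}$ on $I_{a_1}$ is $x\mapsto px - a_1$, which by Proposition \ref{ginbasepI} just deletes the leading digit; hence it sends the interval indexed by $(a_1a_2\ldots a_n)_p$ onto the interval indexed by $(a_2\ldots a_n)_p$ at scale $p^{n-1}$, and the sign contributed by $g_{p,I}$ is $+$. Therefore $(a_1\ldots a_n)_p\in I_{p^n}$ iff $g_{p,I}^{n-1}$ is increasing on that image interval, i.e.\ iff $(a_2\ldots a_n)_p\in I_{p^{n-1}}$. If instead $a_1\notin I$, then $g_{p,I}$ on $I_{a_1}$ is $x\mapsto a_1+1-px$, which by Proposition \ref{ginbasepI} deletes the leading digit and replaces each remaining digit $a_i$ by $p-1-a_i$; so it sends the interval indexed by $(a_1a_2\ldots a_n)_p$ onto the interval indexed by $(c_2\ldots c_n)_p$ with $c_i = p-1-a_i$, and the sign contributed by $g_{p,I}$ is $-$. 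Composing, $g_{p,I}^n$ is increasing on the original interval iff $g_{p,I}^{n-1}$ is \emph{decreasing} on the image interval, i.e.\ iff $(c_2\ldots c_n)_p\notin I_{p^{n-1}}$.

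I do not expect a serious obstacle here; the proposition is essentially a bookkeeping statement and the only care needed is to be precise about open versus closed intervals and about the orientation-reversing case. The one point to state cleanly is that $g_{p,I}$ maps the open interval $(\frac{k}{p^n},\frac{k+1}{p^n})$ bijectively and monotonically onto the open interval indexed (at level $n-1$) by $(a_2\ldots a_n)_p$ or $(c_2\ldots c_n)_p$ respectively, so that ``increasing on a subinterval'' really is governed by the digit-level recursion of Proposition \ref{ginbasepI}; once this is pinned down, the ``if and only if'' in each case is immediate from the product-of-signs observation. This is exactly the analogue of the first paragraph of the proof of Proposition \ref{pibasep}, and I would present it in the same style, so it can be left to the reader in the same spirit as the remark that follows the statement of Propositions \ref{gIfixedpts} and \ref{ginbasepI}.
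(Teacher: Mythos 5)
Your proposal is correct and follows essentially the same route as the paper: decompose $g_{p,I}^n = g_{p,I}^{n-1}\circ g_{p,I}$, identify the image of the interval $(\frac{k}{p^n},\frac{k+1}{p^n})$ under the first (affine) branch as the interval indexed by $(a_2\ldots a_n)_p$ or $(c_2\ldots c_n)_p$, and read off monotonicity from the product of the two slopes' signs. Your version is in fact slightly more careful than the paper's (which has a small typo in the scale of the image intervals, writing $p^n$ where $p^{n-1}$ is meant), but the argument is the same.
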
  

\begin{proof}
   Notice that  $k=(a_1\ldots a_{n})_p \in I_{p^{n}}$ if and only if the function $g_{p,I}^n$ in the interval  $(\frac{k}{p^n},\frac{k+1}{p^n})$ is increasing. In case $a_1 \in I$, then $$g_{p,I}^n(x)= g_{p,I}^{n-1}(g_{p,I}(x))=g_{p,I}^{n-1}(px-a_1)$$ for $x$ in that interval.  In that case $px-a_1\in (\frac{(a_2\ldots a_{n})_p}{p^n},\frac{(a_2\ldots a_{n})_p+1}{p^n})$ is increasing, and then $k=(a_1\ldots a_{n})_p \in I_{p^{n}}$ if and only if $k=(a_2\ldots a_{n})_p \in I_{p^{n-1}}.$
  Now if  $a_1\notin I$, $$g_{p,I}^n(x)= g_{p,I}^{n-1}(g_{p,I}(x))=g_{p,I}^{n-1}(a_1+1-px)$$ for $x\in (\frac{k}{p^n},\frac{k+1}{p^n})$. In this case $a_1+1-px\in \frac{(c_2\ldots c_{n})_p}{p^n},\frac{(c_2\ldots c_{n})_p+1}{p^n})$ is decreasing. Therefore  $(a_1\ldots a_{n})_p \in I_{p^{n}}$ if and only if $(c_2\ldots c_{n})_p\notin I_{p^{n-1}}$.
\end{proof}

\begin{proposition}\label{prop-pipnI}
Let $n\ge 2$. If $ k=pb+d $ for $b$, $d$ integers with $0\le d <p$ then
\[\pi_{p^n,I}(k)=p\pi_{p^{n-1},I}(b) +\begin{cases}
   d    \text{ for $b \in I_{p^{n-1}}$}\\
   (p-d-1)\text{ otherwise.}
\end{cases}\]
\end{proposition}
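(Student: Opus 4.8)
\textbf{Proof proposal for Proposition \ref{prop-pipnI}.}

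The plan is to unwind the recursive definition of $\pi_{p^n,I}$ (Definition \ref{pipnI}) one step, but from the \emph{last} digit rather than the first, and to connect the two via the structure already recorded in Proposition \ref{propI}. Write $k = pb+d$ with $0 \le d < p$, and let $a_1$ be the leading base-$p$ digit of $k$, so that $a_1 p^{n-1} \le k < (a_1+1)p^{n-1}$; note $a_1$ is also the leading digit of $b$ (since $b = \lfloor k/p\rfloor$ and dividing by $p$ only drops the last digit). First I would induct on $n$. The base case $n=2$ can be checked directly against Definition \ref{pipnI}: there $\pi_{p^2,I}(k) = a_1 p + \pi_{p^1,I}(\cdot)$ where the inner argument is $k - a_1 p = d'$ with $d' = d$ if $a_1 \in I$ and $d' = p-1-d$ if $a_1 \notin I$; since $\pi_{p^1,I}$ is the identity and $b = a_1$ in this case, and $b \in I_{p^1} = I$ iff $a_1 \in I$, this matches the claimed formula.

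For the inductive step, apply Definition \ref{pipnI} to $k$ with leading digit $a_1$:
\[
\pi_{p^n,I}(k) = a_1 p^{n-1} + \pi_{p^{n-1},I}(k'), \qquad k' = \begin{cases} k - a_1 p^{n-1} & a_1 \in I \\ p^{n-1} - (k - a_1 p^{n-1}) - 1 & a_1 \notin I. \end{cases}
\]
Now write $b = a_1 p^{n-2} + b'$, so $b' = b - a_1 p^{n-2}$ is the number formed by the lower digits of $b$. The key bookkeeping step is to identify $k'$ in the form $p b'' + d''$: when $a_1 \in I$ one has $k' = p b' + d$ directly, and when $a_1 \notin I$ one computes $k' = p(p^{n-2} - 1 - b') + (p - 1 - d)$, using that $p^{n-1} - 1 = p(p^{n-2}-1) + (p-1)$. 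Apply the induction hypothesis to $\pi_{p^{n-1},I}(k')$ (valid since $n-1 \ge 2$), which expresses it as $p\,\pi_{p^{n-2},I}(\cdot) + (\text{last piece})$, where the inner argument is $b'$ or $p^{n-2}-1-b'$ according to the same case split. Then re-fold: by Definition \ref{pipnI} applied to $b$, the quantity $a_1 p^{n-2} + \pi_{p^{n-2},I}(b' \text{ or } p^{n-2}-1-b')$ is exactly $\pi_{p^{n-1},I}(b)$, so multiplying through by $p$ and reassembling with $a_1 p^{n-1} = p \cdot a_1 p^{n-2}$ yields $\pi_{p^n,I}(k) = p\,\pi_{p^{n-1},I}(b) + (\text{last piece})$.

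The remaining point is to verify that the ``last piece'' is $d$ when $b \in I_{p^{n-1}}$ and $p-1-d$ otherwise, matching the statement. Here is where Proposition \ref{propI} does the work: tracing the case splits, when $a_1 \in I$ the last piece came out as $d$ or $p-1-d$ according to whether $b' \in I_{p^{n-2}}$, and Proposition \ref{propI} (with $a_1 \in I$) says $b = (a_1 a_2 \cdots a_{n-1})_p \in I_{p^{n-1}}$ iff $b' = (a_2 \cdots a_{n-1})_p \in I_{p^{n-2}}$, so the condition transfers correctly; when $a_1 \notin I$ the arithmetic above flipped $d \leftrightarrow p-1-d$ once and also flipped $b' \leftrightarrow p^{n-2}-1-b'$, and Proposition \ref{propI} (with $a_1 \notin I$) says $b \in I_{p^{n-1}}$ iff $(c_2 \cdots c_{n-1})_p = p^{n-2}-1-b' \notin I_{p^{n-2}}$, and one checks the two flips compose to give precisely the stated answer. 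I expect the main obstacle to be exactly this last sign/complement chase: keeping straight, across the two nested case splits ($a_1 \in I$ or not, and $b' \in I_{p^{n-2}}$ or not), that the ``$d$ versus $p-1-d$'' alternative ends up governed by membership of $b$ itself in $I_{p^{n-1}}$; Proposition \ref{propI} is the lemma that makes this collapse cleanly, so the argument is a careful but routine induction once that correspondence is invoked at the right moment.
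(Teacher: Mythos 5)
Your proposal is correct and follows essentially the same route as the paper's proof: induction on $n$ with the base case $n=2$ checked against Definition \ref{pipnI}, unwinding the recursion at the leading digit $a_1$, applying the inductive hypothesis to the tail written as $pb''+d''$, re-folding via the definition applied to $b$, and invoking Proposition \ref{propI} to convert the membership condition on the truncated (or complemented) number into membership of $b$ in $I_{p^{n-1}}$. The sign/complement bookkeeping you flag as the main obstacle works out exactly as you describe, so no gap remains.
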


\begin{proof}

  By induction, for $n=2$, then both expressions coincide, since
  \[\pi_{p^2,I}(k)=\begin{cases}
  pb + d    \text{ for $b \in I_{p^{1}}=I$}\\
  pb + (p-1-d)\text{ otherwise.}
  \end{cases}\]
  is equivalent to the expression obtained for $\pi_{p^2,I}(k)$ from Definition \ref{pipnI}, where $a=b$ and $d=k-ap$.
  
  Now assume Proposition \ref{prop-pipnI} for a fixed $n\le 2$ and let us check it for $n+1$. We use the notation base $p$ here, so take $k=(a_1\ldots a_{n+1})_p<p^{n+1}$.
  By definition,
  \[\pi_{p^{n+1},I}(k)=\begin{cases}
   a_1p^{n} + \pi_{p^{n},I}(k-a_1p^{n}) \text{ for $a_1\in I$}\\
   a_1p^{n} + \pi_{p^{n-1},I}(p^{n-1}-(k-a_1p^{n-1})-1)  \text{ for $a_1 \notin I$}
\end{cases}\]

\begin{itemize}
\item  Consider first the case $a_1\in I$. Here $k-a_1p^{n}=(a_2\ldots a_{n+1})_p$ and by hypothesis, we have that
  \[\pi_{p^{n},I}(k-a_1p^{n})=\begin{cases}
  p\pi_{p^{n-1},I}((a_2\ldots a_{n})_p) + a_{n+1}    \text{ for $b' \in I_{p^{n-1}}$}\\
  p\pi_{p^{n-1},I}((a_2\ldots a_{n})_p) + (p-a_{n+1}-1)\text{ otherwise.}
  \end{cases}\]

  for $b'=(a_2\ldots a_{n})_p$. Then,
  \[   \pi_{p^{n+1},I}(k) =   a_1p^{n} + p\pi_{p^{n-1},I}((a_2\ldots a_{n})_p) +
  \begin{cases}
   a_{n+1}    \text{ for $b' \in I_{p^{n-1}}$}\\
   (p-a_{n+1}-1)\text{ otherwise}
  \end{cases}
  \]
  
  \[  \pi_{p^{n+1},I}(k) =   p\pi_{p^{n},I}((a_1\ldots a_{n})_p) +
  \begin{cases}
   a_{n+1}    \text{ for $(a_2\ldots a_{n})_p \in I_{p^{n-1}}$}\\
   (p-a_{n+1}-1)\text{ otherwise}
    \end{cases}\]

  Using $a_1\in I$ and by Proposition \ref{propI}, we get the desired condition.

\item Now for the case $a_1\notin I$. Here $(p^{n-1}-(k-a_1p^{n-1})-1)=(c_2\ldots c_{n+1})_p$ where $c_i=p-a_i-1$. By hypothesis, we have that

  \[\pi_{p^{n},I}((c_2\ldots c_{n+1})_p)=\begin{cases}
  p\pi_{p^{n-1},I}((c_2\ldots c_{n})_p) + c_{n+1}    \text{ for $b' \in I_{p^{n-1}}$}\\
  p\pi_{p^{n-1},I}((c_2\ldots c_{n})_p) + a_{n+1}\text{ otherwise.}
  \end{cases}\]

  for $b'=(c_2\ldots c_{n})_p$. Then,
\[
  \pi_{p^{n+1},I}(k) =   a_1p^{n} + p\pi_{p^{n-1},I}((c_2\ldots c_{n})_p) +
  \begin{cases}
   c_{n+1}    \text{ for $b' \in I_{p^{n-1}}$}\\
   a_{n+1}\text{ otherwise.}
  \end{cases}\]

  \[\pi_{p^{n+1},I}(k)  =   p\pi_{p^{n},I}((a_1\ldots a_{n})_p) +
  \begin{cases}
   (p-a_{n+1}-1)    \text{ for $(c_2\ldots c_{n})_p \in I_{p^{n-1}}$}\\
   a_{n+1}\text{ otherwise.}
   \end{cases}\]

Now by the second part of Proposition \ref{propI}, the result holds.
  
\end{itemize}

\end{proof}

\begin{definition}[Bijection $B_{\alpha,I}$]
 Let $\alpha$ be a primitive root (i.e., a generator of the multiplicative group) of $\F_{p^n}$ (\cite{ffields}) and  let $x_0<x_1<\cdots<x_{p^{n}-1}$ be the fixed points of $g_{p,I}^n$. 
 We define the bijection $B_{\alpha,I}:\FP(g_{p,I}^n)\rightarrow \F_{p^n}$ by  
 $B_\alpha(x_{\pi_{p^n,I}(0)})=0\in \F_{p^n}$ or  $B_\alpha(x_k)=\alpha^{\pi_{p^n,I}(k)}$ for all other $k>0$. 
\end{definition}

The following theorem generalizes Theorem \ref{t2}. We keep the initial proof of this Theorem \ref{t2}, since the continuous case where $I$ is the set of even numbers less than $p$ has some extra structure and provides a good example of the general theory, where the sets $I_{p^{n}}$ and functions $\pi_{p^{n},I}$ could be described in more detail.

\begin{theorem}\label{t2I}
The function $B_{\alpha,I}$ is a bijection and satisfy that $B_{\alpha,I}(x_k)^p=B_{\alpha,I}(g_{p,I}(x_k))$ for any fixed point $x_k\in \FP(f_p^n).$ 
\end{theorem}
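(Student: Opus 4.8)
\textbf{Proof proposal for Theorem \ref{t2I}.}
The plan is to imitate the proof of Theorem \ref{t2}, but to replace the ad-hoc parity bookkeeping there (which exploited that, in the continuous case, $I$ consists of the even residues, so the parity of the partial sum $b_1+\cdots+b_{i-1}$ records whether an even or odd number of "flips" has occurred) by the genuinely recursive machinery of Proposition \ref{prop-pipnI} together with Proposition \ref{propI}. The key structural facts I would use are: (i) Proposition \ref{gIfixedpts}, which expresses each fixed point $x_k$ of $g_{p,I}^n$ in base $p$ as the purely periodic expansion $(0.\overline{a_1\ldots a_n a'_1\ldots a'_n})_p$, where $a'_i=a_i$ if $k\in I_{p^n}$ and $a'_i=p-1-a_i$ otherwise (this follows exactly as Proposition \ref{periodicfp}, since the denominators $p^n\mp 1$ are unchanged); (ii) Proposition \ref{ginbasepI}, describing $g_{p,I}$ as a shift-and-possibly-complement operator on base-$p$ digits; and (iii) Proposition \ref{prop-pipnI}, which says that writing $k=pb+d$ one has $\pi_{p^n,I}(k)=p\,\pi_{p^{n-1},I}(b)+d$ or $p\,\pi_{p^{n-1},I}(b)+(p-d-1)$ according as $b\in I_{p^{n-1}}$ or not.

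First I would dispose of the point $x_{\pi_{p^n,I}(0)}$ mapped to $0\in\F_{p^n}$: since $0^p=0$ and $g_{p,I}$ fixes whichever endpoint-type fixed point this is (one checks directly that $g_{p,I}$ permutes $\FP(g_{p,I}^n)$, so the unique point sent to $0$ must be sent to the unique point sent to $0$), that case holds trivially. For $k\neq 0$, write $k=(a_1\ldots a_n)_p$ and $\pi_{p^n,I}(k)=(b_1\ldots b_n)_p$. As in Theorem \ref{t2}, multiplying by $p$ in the exponent and using $\alpha^{p^n}=\alpha$ gives $(B_{\alpha,I}(x_k))^p=\alpha^{(b_2 b_3\ldots b_n b_1)_p}$, so I must show that the image of $g_{p,I}(x_k)$ under $B_{\alpha,I}$ equals $\alpha^{(b_2\ldots b_n b_1)_p}$. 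Split into the two cases of Proposition \ref{ginbasepI}. If $a_1\in I$, then by Propositions \ref{gIfixedpts} and \ref{ginbasepI}, $g_{p,I}(x_k)=(0.\overline{a_2\ldots a_n a'_1 a_2\ldots})_p=x_{k'}$ with $k'=(a_2\ldots a_n a'_1)_p$; if $a_1\notin I$, then $g_{p,I}(x_k)=x_{k'}$ with $k'=(c_2\ldots c_n c'_1)_p$ where $c_i=p-1-a_i$, $c'_1=p-1-a'_1$. In each case I then compute $\pi_{p^n,I}(k')$ by peeling off the \emph{last} digit via Proposition \ref{prop-pipnI}, i.e.\ with $b=(a_2\ldots a_n)_p$ (resp.\ $b=(c_2\ldots c_n)_p$) and $d=a'_1$ (resp.\ $d=c'_1$), and I recognize $\pi_{p^{n-1},I}(b)$ from the recursion in Definition \ref{pipnI} applied to $\pi_{p^n,I}(k)$ with its \emph{first} digit peeled off. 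Matching the "$d$ vs.\ $p-d-1$" alternative in Proposition \ref{prop-pipnI} against the "$k'\in I_{p^n}$ vs.\ not" alternative in Proposition \ref{gIfixedpts} is exactly what Proposition \ref{propI} controls, so the two descriptions of the digit string coincide and I get $\pi_{p^n,I}(k')=(b_2 b_3\ldots b_n b_1)_p$, as required.

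The bijectivity of $B_{\alpha,I}$ is immediate: $k\mapsto\pi_{p^n,I}(k)$ is a permutation of $\{0,\ldots,p^n-1\}$ by construction, $\alpha$ is a primitive root so $j\mapsto\alpha^j$ is a bijection $\{1,\ldots,p^n-1\}\to\F_{p^n}^{\times}$, and the single value $0$ is assigned to the remaining fixed point; combined with Proposition \ref{gIfixedpts} (there are exactly $p^n$ fixed points) this makes $B_{\alpha,I}$ a bijection onto $\F_{p^n}$.

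The main obstacle I anticipate is purely bookkeeping rather than conceptual: I must be careful that "peeling the first digit" (the recursion in Definition \ref{pipnI}, which conditions on $a_1\in I$) and "peeling the last digit" (Proposition \ref{prop-pipnI}, which conditions on the truncation $b$ lying in $I_{p^{n-1}}$) are applied to the \emph{same} permutation and that the intermediate truncations are tracked consistently through an induction on $n$ — in particular I should verify the base case $n=1$ (where $\pi_{p,I}=\mathrm{id}$, the fixed points are $0,x_1,\ldots,x_{p-1}$, and the claim reduces to $g_{p,I}$ acting as the $p$-power Frobenius, which one checks by hand), and then carry the inductive step so that the phrase "$b'\in I_{p^{n-1}}$" in Proposition \ref{prop-pipnI} is correctly converted, via Proposition \ref{propI}, into a statement about the first digit $a_1$ and the truncation of $k'$. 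Once the indices are lined up, everything reduces to the identity already proved in the continuous case; indeed setting $I=\{0,2,4,\ldots\}$ recovers Theorem \ref{t2} verbatim.
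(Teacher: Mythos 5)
Your proposal is correct and follows essentially the same route as the paper's own proof: express the fixed point's base-$p$ expansion via Propositions \ref{gIfixedpts} and \ref{periodicfp}, apply the digit-shift description of $g_{p,I}$ from Proposition \ref{ginbasepI}, then reconcile the last-digit recursion of Proposition \ref{prop-pipnI} with the first-digit recursion of Definition \ref{pipnI} using Proposition \ref{propI} to match the case splits. Your additional remarks on the zero fixed point and the base case $n=1$ are harmless refinements of the same argument.
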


\begin{proof}

Consider a fixed point $x_k$ for $k=(a_1\ldots a_n)_p$.  From Proposition \ref{gIfixedpts} and  \ref{periodicfp} we can express  $x_k=(0.\overline{a_1\ldots a_na'_1\ldots a'_n})_p$ where $a'_i=a_i$ in case that $(a_1\ldots a_n)_p\in I_{p^n}$ , or $a'_i=p-1-a_i$ if $(a_1\ldots a_n)_p \notin I_{p^n}$.

Denote $\pi_{p^n,I}(k)=(b_1 b_2\ldots b_n)_p,$. 
Then $(B_\alpha(x_k))^p=\alpha^{p(b_1\ldots b_n)_p}=\alpha^{(b_1\ldots b_n0)_p}=\alpha^{(b_2\ldots b_nb_1)_p}$, since $\alpha^{p^n}=\alpha$ in the finite field $\F_{p^n}$. 
Now we consider two cases, depending if $a_1$ is in $I$ or not.

\begin{itemize}
\item If $a_1 \in I$:
  Then $g_p(x_i)=(0.\overline{a_2\ldots a_na'_1\ldots a'_na_1})_p=x_{(a_2\ldots a_na'_1)_p}$, due to Proposition \ref{ginbasepI}. We denote $B_\alpha(g_p(x_k))=\alpha^{(d_2 \ldots d_nd_1)_p}$, where $(d_2 \ldots d_nd_1)_p=\pi_{p^n,I}((a_2\ldots a_na'_1)_p)$. Now we need to check that $d_i=b_i$ for $i=1, \ldots,n$.
By Proposition \ref{prop-pipnI}, then 
\[\pi_{p^n,I}((a_2\ldots a_na'_1)_p)=p\pi_{p^{n-1},I}((a_2\ldots a_n)_p) + \begin{cases}
   a'_1    \text{ for $(a_2\ldots a_n)_p \in I_{p^{n-1}}$}\\
  (p-1-a'_1)\text{ otherwise.}
\end{cases}\]

Therefore  $(d_2 \ldots d_n)_p=\pi_{p^{n-1},I}((a_2\ldots a_n)_p)$, while $d_1= a'_1$ if $(a_2\ldots a_n)_p \in I_{p^{n-1}}$ or $d_1= p-1-a'_1$ otherwise.
  
On the other hand, since $a_1\in I$, by Definition \ref{pipnI}, we have that
$(b_1 b_2\ldots b_n)_p=a_1p^{n-1} + \pi_{p^{n-1},I}((a_2\ldots a_n)_p)$ and therefore $(b_2 \ldots b_n)_p=\pi_{p^{n-1},I}((a_2\ldots a_n)_p)=(d_2 \ldots d_n)_p$, while $b_1=a_1$. We use here Proposition \ref{propI}, to see that $(a_1\ldots a_{n})_p \in I_{p^{n}}$ if and only if $(a_2\ldots a_{n})_p\in I_{p^{n-1}}$. Therefore if $(a_1\ldots a_{n})_p \in I_{p^{n}}$, then  
$b_1=a_1=a'_1=d_1$, and if $(a_1\ldots a_{n})_p \notin I_{p^{n}}$, is easy to check that $b_1=d_1$ as well.

\item If $a_1\notin I$:
 In this case $g_p(x_i)=(0.\overline{c_2\ldots c_nc'_1\ldots c'_nc_1})_p=x_{(c_2\ldots c_nc'_1)_p}$, where $c_i=p-1-a_i$ and $c'_i=p-1-a'_i,$ due to Proposition \ref{ginbasepI}. We denote $B_\alpha(g_p(x_k))=\alpha^{(d_2 \ldots d_nd_1)_p}$, where $(d_2 \ldots d_nd_1)_p=\pi_{p^n}((c_2\ldots c_nc'_1)_p)$. 

 Again we need to check   that $d_i=b_i$ for $i=1, \ldots,n$. This time, since $a_1\notin I$, we have that $(b_1 b_2\ldots b_n)_p=a_1p^{n-1} + \pi_{p^{n-1},I}((c_2\ldots c_n)_p)$, while by Proposition \ref{prop-pipnI} we have that
\[\pi_{p^n,I}((c_2\ldots c_nc'_1)_p)=p\pi_{p^{n-1},I}((c_2\ldots c_n)_p) + \begin{cases}
   c'_1    \text{ for $(c_2\ldots c_n)_p \in I_{p^{n-1}}$}\\
   p-1-c'_1\text{ otherwise.}
   \end{cases}
   \]

   Then $(b_2 \ldots b_n)_p=\pi_{p^{n-1},I}((c_2\ldots c_n)_p)=(d_2 \ldots d_n)_p$, while $b_1=a_1$ and $d_1=c'_1$ for $(a_2\ldots a_n)_p \in I_{p^{n-1}}$, or $d_1=a'_1$ otherwise. This time we use the second part of Proposition \ref{propI} to see that $(a_1\ldots a_{n})_p \in I_{p^{n}}$ if and only if $(c_2\ldots c_{n})_p\notin I_{p^{n-1}}$.
   We check both cases, first if $(a_1\ldots a_{n})_p \in I_{p^{n}}$, then $a_1=a'_1$, while $(c_2\ldots c_{n})_p\notin I_{p^{n-1}}$ implies $d=p-1-c'_1=a'_1=a_1=b_1$. 
   On the other hand, $(a_1\ldots a_{n})_p \in I_{p^{n}}$ implies that $a'_1=c_1=p-1-a_1$, while $(c_2\ldots c_{n})_p\in I_{p^{n-1}}$ and therefore $d_1=c'_1=p-1-a'_1=a_1=b_1$. In any case, $b_i=d_i$, as desired.
   
\end{itemize}
\end{proof}

\begin{example}
For $p=3$, $n=2$ and $p=3$, $n=3$ with $I=\{2\}$, we have that $\pi_{3^n,I}$ permutes the numbers from 0 to $3^n-1$ as follows:

$\pi_{3^2,I}:$ 2 1 0 5 4 3 6 7 8

$\pi_{3^3,I}:$ 8 7 6 3 4 5 0 1 2 17 16 15 12 13 14 9 10 11 20 19 18 23 22 21 24 25 26

We also compute $I_{3^2}=\{1,2,4,5,8\}$ and $I_{3^3}=\{1, 2, 5, 8, 10, 11, 14,17,19,20,22,23,26\}$ and plot the fixed points for $g_{3^2,I}$ and $g_{3^3,I}$, and its orbits under the action of $g_{3,I}$.


\begin{figure}[h!]
    \centering
    \includegraphics[width=10.5cm]{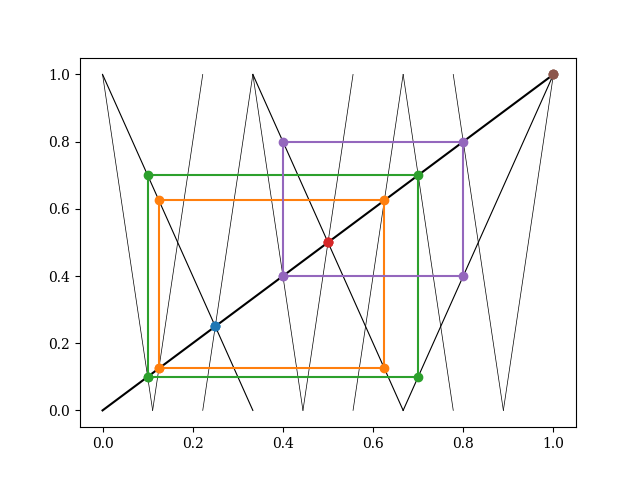}
    \label{gpn}
    \caption{Fixed points of $g_{3^2,I}$ for $I=\{2\}$.}

    \centering
    \includegraphics[width=10.5cm]{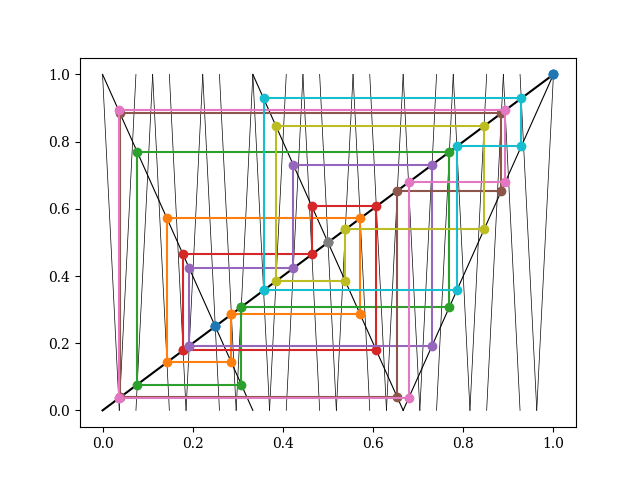}
    
    \caption{Fixed points of $g_{3^3,I}$ for $I=\{2\}$.}
\end{figure}
\end{example}


\newpage
\newpage
\bibliography{mybibtex}{}

\begin{thebibliography}{1}

\bibitem{chaos}
Pierre Collet and J.-P. Eckmann.
\newblock {\em Iterated Maps on the Interval as Dynamical Systems}, volume
  Reprint of the 1980 Edition of {\em Algorithms and Computation in
  Mathematics}.
\newblock Birkhäuser, 2009.

\bibitem{conway}
Frank Lübeck.
\newblock Standard generators of finite fields and their cyclic subgroups.
\newblock {\em Journal of Symbolic Computation}, 117, 11 2022.

\bibitem{Chebyshev}
John~C. Mason and David Handscomb.
\newblock {\em Chebyshev Polynomials}.
\newblock Chapman and Hall/CRC Press, 2002.

\bibitem{ffields}
Gary~L. Mullen and Daniel Panario.
\newblock {\em Handbook of Finite Fields}.
\newblock Discrete Mathematics and its Applications. CRC Press, 2010.

\bibitem{sharkovsky}
A.~N. Sharkovsky.
\newblock Co-existence of the cycles of a continuous mapping of the line into
  itself.
\newblock {\em Ukranian Mathematical Journal}, 1:61--71, 1964.

\bibitem{sage}
W.\thinspace{}A. Stein et~al.
\newblock {\em {S}age {M}athematics {S}oftware ({V}ersion 4.6.2)}.
\newblock The Sage Development Team, 2011.
\newblock {\tt http://www.sagemath.org}.

\end{thebibliography}
\bibliographystyle{plain}

\end{document}